\def\Ddots{\mathinner{\mkern1mu\raise\p@
\vbox{\kern7\p@\hbox{.}}\mkern2mu
\raise4\p@\hbox{.}\mkern2mu\raise7\p@\hbox{.}\mkern1mu}}
\newtheorem {theorem}{Theorem}[section]
\newtheorem {lemma}[theorem]{Lemma}
\newtheorem {proposition}[theorem]{Proposition}
\newtheorem {corollary}[theorem]{Corollary}
\theoremstyle{definition}
\newtheorem {definition}[theorem]{Definition}
\newtheorem {remark}[theorem]{Remark}
\DeclareMathOperator\curl{curl}
\DeclareMathOperator\Div{div}
\renewcommand{\div}{\Div}
\begin{document}

% Titlepage
\title{An equivariant Reeb-Beltrami correspondence and the Kepler-Euler flow}

%%%%%%%%%%%%%%%%%%%%%%%%%%%%%%%%%%%%%%%%%%%%%%%%%%%%
		%		AUTHORS 		%
%%%%%%%%%%%%%%%%%%%%%%%%%%%%%%%%%%%%%%%%%%%%%%%%%%%%
\author{Josep Fontana-McNally}
\address{Josep Fontana-McNally,  
Laboratory of Geometry and Dynamical Systems, Department of Mathematics, Universitat Polit\`{e}cnica de Catalunya   }
\email{ josep.fontana@estudiantat.upc.edu}
\thanks{J. Fontana-McNally is supported by an UPC-INIREC grant through the grant ``Computational, dynamical and geometrical complexity in fluid dynamics”, Ayudas Fundación BBVA a Proyectos de Investigación Científica 2021. J. Fontana-McNally and E. Miranda are both partially supported by the Spanish State Research Agency grant PID2019-103849GB-I00 of AEI / 10.13039/501100011033 and by the AGAUR project 2021 SGR 00603 Geometry of Manifolds and Applications, GEOMVAP. All the authors are partially supported by the grant ``Computational, dynamical and geometrical complexity in fluid dynamics”, Ayudas Fundación BBVA a Proyectos de Investigación Científica 2021.}

\author{Eva Miranda}
\address{ Eva Miranda,
Laboratory of Geometry and Dynamical Systems and IMTech, Department of Mathematics, Universitat Polit\`{e}cnica de Catalunya  and Centre de Recerca Matemàtica, CRM }
\email{ eva.miranda@upc.edu}
\thanks{E. Miranda is supported by the Catalan Institution for Research and Advanced Studies via an ICREA Academia Prize 2021 and by the Alexander Von Humboldt foundation via a Friedrich Wilhelm Bessel Research Award. E. Miranda is also supported by the Spanish State
Research Agency, through the Severo Ochoa and Mar\'{\i}a de Maeztu Program for Centers and Units
of Excellence in R\&D (project CEX2020-001084-M). }

\author{Daniel Peralta-Salas} 
\address{Daniel Peralta-Salas, Instituto de Ciencias Matem\'aticas, Consejo Superior de Investigaciones Cient\'ificas,
28049 Madrid, Spain}
\email{dperalta@icmat.es}
\thanks{D. Peralta-Salas is supported by the grants CEX2019-000904-S, RED2022-134301-T and PID2019-106715GB GB-C21 funded by MCIN/AEI/10.13039/501100011033.}

%%%%%%%%%%%%%%%%%%%%%%%%%%%%%%%%%%%%%%%%%%%%%%%%%%%%
		%		ABSTRACT 		%
%%%%%%%%%%%%%%%%%%%%%%%%%%%%%%%%%%%%%%%%%%%%%%%%%%%%
\begin{abstract}
We prove that the correspondence between Reeb  and Beltrami vector fields presented in \cite{EG00} can be made equivariant whenever additional symmetries of the underlying geometric structures are considered.  As a corollary of this correspondence, we show that energy levels above the maximum of the potential energy of mechanical Hamiltonian systems can be viewed as stationary fluid flows, though the metric is not prescribed. In particular, we showcase the emblematic example of the $n$-body problem and focus on the Kepler problem. We explicitly construct a compatible Riemannian metric that makes the Kepler problem of celestial mechanics a stationary fluid flow (of Beltrami type) on a suitable manifold, the \textit{Kepler-Euler flow}. 
\end{abstract}

%%%%%%%%%%%%%%%%%%%%%%%%%%%%%%%%%%%%%%%%%%%%%%%%%%%%
\maketitle

%%%%%%%%%%%%%%%%%%%%%%%%%%%%%%%%%%%%%%%%%%%%%%%%%%%%
%SECTIONS OF THE ARTICLE

%%%%%%%%%%%%%%%%%%%%%%%%%%%%%%%%%%%%%%%%%%%%%%%%%%%%
		%		INTRODUCTION 		%
%%%%%%%%%%%%%%%%%%%%%%%%%%%%%%%%%%%%%%%%%%%%%%%%%%%%

\section{Introduction}

Reeb and Beltrami vector fields are two important classes of vector fields that appear naturally in classical mechanics and hydrodynamics, respectively. In mechanical systems, Reeb vector fields are often obtained by restriction of Hamiltonian vector fields to level-sets of their Hamiltonians. Furthermore, the classical Weinstein conjecture asserts that Reeb vector fields exhibit at least one periodic orbit on compact manifolds\footnote{This has been proved in dimension 3 \cite{T07} and in multiple other scenarios}. On the other hand, Beltrami vector fields are in a sense (see Section \ref{sec:beltrami}), the most interesting stationary solutions of the incompressible Euler equations from a dynamical point of view. A more comprehensive introduction to Reeb and Beltrami vector fields is given in Subsections \ref{sec:contact} and \ref{sec:beltrami} below.  

 Sullivan envisaged\footnote{See \url{ https://www.math.stonybrook.edu/Videos/Einstein/425-19941109-Sullivan.html}} that these vector fields, as well as their associated geometric structures (contact forms for Reeb vector fields and Riemannian metrics for Beltrami vector fields), are closely interconnected. Etnyre and Ghrist formalized this idea in \cite{EG00}, showing that they are in fact related through a simple correspondence.

\begin{theorem}[\cite{EG00}]\label{thm:EG}
    Let $M$ be a $3$-dimensional manifold. For any contact form on $M$ and any nonzero rescaling of the associated Reeb vector field, there is a Riemannian metric for which the rescaled Reeb vector field is Beltrami. Conversely, for any Riemannian metric on $M$ and any nonvanishing Beltrami vector field, there is a contact form for which the Beltrami vector field is a rescaling of the Reeb vector field.
\end{theorem}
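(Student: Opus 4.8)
The plan is to prove both directions by unwinding the definitions of Reeb and Beltrami vector fields and constructing the required geometric structure explicitly.

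Let me recall the definitions. A contact form $\alpha$ on a 3-manifold $M$ satisfies $\alpha \wedge d\alpha \neq 0$. The Reeb vector field $R$ of $\alpha$ is defined by $\iota_R d\alpha = 0$ and $\alpha(R) = 1$. A Beltrami vector field $X$ for a Riemannian metric $g$ satisfies $\mathrm{curl}\, X = f X$ for some function $f$, where curl is the metric curl; equivalently, $\iota_X \mu = d\beta$ where... let me think more carefully. For a volume form $\mu$ and metric $g$, with $X^\flat$ the 1-form dual to $X$, the Beltrami condition is $\mathrm{curl}\, X \parallel X$, i.e., $\iota_X \mu$ is a multiple... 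Actually the cleanest formulation: $X$ is Beltrami iff $\iota_X d(X^\flat) = 0$, which says $d(X^\flat) = h\, \iota_X \mu$ for some function, equivalently $\mathrm{curl}\, X = fX$. So the key translation is that the metric-dual 1-form of a Beltrami field behaves exactly like a contact form whose Reeb field is (a rescaling of) $X$.

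=== FORWARD DIRECTION ===

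For the forward direction, suppose I am given a contact form $\alpha$ with Reeb field $R$ and a nonzero rescaling $X = gR$ for some nowhere-zero function $g$. First I would set $\mu = \alpha \wedge d\alpha$, a volume form since $\alpha$ is contact. The natural guess is to build a metric $h$ for which $X^\flat = \alpha$ (up to scale) and $X$ has unit length; concretely I would declare $X$ to be orthogonal to $\ker\alpha$, normalize $|X|_h$, and choose $h$ on the contact distribution $\ker\alpha$ so that $d\alpha$ restricted there is compatible with the induced area form. The point is that $\iota_X d(X^\flat)$ should vanish: since $X = gR$ and $\iota_R d\alpha = 0$, the only obstruction comes from the $dg$ term, and this is handled by absorbing the rescaling into the choice of $\beta = X^\flat$ and the volume normalization. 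I would verify that $\mathrm{curl}_h X = fX$ by computing $d(X^\flat)$ and showing it is proportional to $\iota_X \mu$.

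=== CONVERSE DIRECTION ===

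For the converse, suppose I am given a metric $g$ and a nonvanishing Beltrami field $X$ with $\mathrm{curl}\, X = fX$. Here I would propose the candidate contact form $\alpha = X^\flat / |X|_g^2$, or more simply examine $\alpha = X^\flat$ rescaled so that $\alpha(X) = 1$. The Beltrami condition $\iota_X d(X^\flat) = 0$ is precisely what forces $\iota_X d\alpha$ to vanish on $\ker\alpha$, so $X$ becomes parallel to the Reeb field. The contact condition $\alpha \wedge d\alpha \neq 0$ must be checked; this is where the hypothesis that $X$ is Beltrami (rather than merely a generic field) is essential, together with possibly a nonvanishing-helicity type condition — and I expect this is the main obstacle, since curl $X = fX$ with $f$ vanishing somewhere could make $d\alpha$ degenerate. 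I would resolve this by tracking the function $f$ and showing $\alpha \wedge d\alpha$ is a nonzero multiple of $\mu$ wherever $f \neq 0$, handling the general case by the appropriate rescaling argument as in \cite{EG00}.

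The hardest step in both directions is ensuring the \emph{compatibility} of all the structures simultaneously: the metric, the volume form, and the (contact or Beltrami) condition must be mutually consistent, and the rescaling function must be threaded through every identity without introducing sign or vanishing problems. I would organize the proof around the single unifying identity relating $d(X^\flat)$, $\iota_X \mu$, and the curl, checking it holds in an adapted frame $(X, e_1, e_2)$ with $e_1, e_2$ spanning $\ker\alpha$.
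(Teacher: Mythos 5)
Your overall skeleton --- dualize by the metric and organize everything around the identity $\iota_{\curl X}\mu = d(X^\flat)$, checked in a frame adapted to $\ker\alpha$ --- is exactly the route the paper takes (Proposition \ref{prop:beltramicontact} plus the non-equivariant part of the proof of Theorem \ref{thm:equivariantcorrespondence}). However, both of your concrete normalization choices would fail, and they fail for the same reason. In the converse direction, your candidate $\alpha = X^\flat/|X|_g^2$ (equivalently, rescaling so that $\alpha(X)=1$) destroys the key identity: the Beltrami condition gives $\iota_X dX^\flat = \iota_X\iota_{\curl X}\mu = f\,\iota_X\iota_X\mu = 0$, but this does not survive the rescaling, since with $u = |X|^{-2}$ one computes
\[
\iota_X d\bigl(u\,X^\flat\bigr) \;=\; X(u)\,X^\flat \;-\; |X|^2\,du,
\]
which is nonzero whenever $|X|$ is nonconstant; so $X$ need not lie in $\ker d\alpha$ for your $\alpha$. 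The correct (and simpler) choice is $\alpha = X^\flat$ with \emph{no} rescaling: then $\iota_X d\alpha = 0$, $\alpha(X) = |X|^2 > 0$, so $X = |X|^2 R$ is a rescaling of the Reeb field --- which is all the theorem asks --- and $\alpha\wedge d\alpha = f\,h\,|X|^2\mu_g \neq 0$ provided $f$ never vanishes (here $\mu = h\mu_g$). On that last point you are right that $f$ vanishing is fatal, but your plan to ``handle the general case by the appropriate rescaling argument as in \cite{EG00}'' is circular (that is the theorem being proved) and impossible: the statement must be read with the paper's notion of \emph{nonsingular} Beltrami field (Definition \ref{def:contact} ff., i.e.\ neither $X$ nor $f$ vanishes). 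Indeed, a constant irrational-slope field on the flat $\mathbb{T}^3$ is nonvanishing and Beltrami with $f\equiv 0$, yet has no periodic orbits, so by Taubes' theorem it cannot be Reeb-like for any contact form; no rescaling rescues this.

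The forward direction has the mirrored problem. Writing $X = hR$ with $h>0$, if you impose orthogonality to $\ker\alpha$ \emph{and} $|X|=1$, then necessarily $X^\flat = \alpha/h$, and
\[
\iota_X d\bigl(\alpha/h\bigr) \;=\; -h^{-2}X(h)\,\alpha + h^{-1}dh,
\]
which is generically nonzero; since $\iota_X dX^\flat = 0$ is necessary for $X$ to be Beltrami with respect to \emph{any} volume form, such a metric can never work. The fix is the opposite normalization, which is exactly what the paper does: build the metric so that $X^\flat = \alpha$ on the nose, namely $g = \frac{1}{h}\alpha\otimes\alpha + d\alpha(\cdot,J\cdot)$ with $J$ a complex structure on $\ker\alpha$ taming $d\alpha$; then $|X|_g^2 = h$ (not $1$), $dX^\flat = d\alpha$, and $\iota_X d\alpha = h\,\iota_R d\alpha = 0$ with no correction terms. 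Your volume form also needs the factor: with $\mu = \alpha\wedge d\alpha$ one gets $\iota_X\mu = h\,d\alpha$, hence $\mathcal{L}_X\mu = dh\wedge d\alpha \neq 0$ in general, so $X$ would not be divergence-free; taking instead $\mu = \frac{1}{h}\alpha\wedge d\alpha$ gives $\iota_X\mu = d\alpha = dX^\flat$, whence $\curl X = X$ and $\mathcal{L}_X\mu = d(d\alpha) = 0$. In short, the rescaling function must be absorbed into the metric and the volume form (so that $X^\flat = \alpha$ and $\iota_X\mu = d\alpha$ exactly), never into the contact form or into a unit-length normalization of $X$; once this is done, every identity you listed closes up with no leftover derivative terms.
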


This correspondence has been used in various settings to translate results and techniques between contact geometry and hydrodynamics. On one hand, tools from contact geometry are adequate to construct Reeb vector fields with arbitrary topological complexity, so through the correspondence one obtains results about topological complexity of Beltrami fields for adapted Riemannian metrics. When the metric of the ambient space is fixed, analogous results are extremely hard to prove (see, e.g. \cite{EP18}). For instance, Etnyre and Ghrist used this technique to prove that there is a  non-vanishing $C^\omega$ Beltrami field on $\mathbb{S}^3$ for some adapted Riemannian metric which exhibits periodic flowlines of all possible knot and link types \cite{EG02}. Following a similar philosophy, the second and the third author of this article used the correspondence to construct a Beltrami field on $\mathbb{S}^3$ for some Riemannian metric, which is Turing complete (i.e., one can perform arbitrary computations by following streamlines of the flow) \cite{CMPP19}. On the other hand, analytical techniques on the hydrodynamics side were used to give lower bounds on the number of escape orbits of $b$-singular Reeb vector fields using a singular version of the correspondence \cite{MOP22, FMOP23}. 

However, in all these references the study of the symmetries of the systems is missing. In this article, we fill this gap and consider the effect of symmetries on both sides of the correspondence. In Section \ref{sec:equivariant} we extend Theorem \ref{thm:EG} by showing that we can also carry over any symmetries of the underlying geometric structures, that is, the correspondence is equivariant.

\begin{theorem}\label{thm:equivariantmirror}
Let $M$ be a $(2n+1)$-dimensional manifold and $\rho: G\times M\rightarrow M$ a \emph{compact} Lie group action on $M$. For any non-vanishing $\rho$-invariant Beltrami vector field $X$ with $\rho$-invariant Riemannian metric $g$, there is a $\rho$-invariant contact form for which $X$ is a rescaling of the corresponding Reeb vector field. Conversely, for each $\rho$-invariant rescaling of a Reeb vector field $X$ with contact form $\alpha$, there is a $\rho$-invariant Riemannian metric for which $X$ is Beltrami.
\end{theorem}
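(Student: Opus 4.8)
The plan is to establish both implications by the natural metric/form dualization underlying Theorem~\ref{thm:EG}, generalized to dimension $2n+1$, while checking at each step that $\rho$-invariance is preserved and isolating the single place where compactness of $G$ is genuinely used. Throughout I write $\rho_\gamma:M\to M$ for the diffeomorphism attached to $\gamma\in G$ and $\xi=\ker\alpha$ for the contact distribution, and I use that a tensor is $\rho$-invariant exactly when it is fixed by every $\rho_\gamma^*$.

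For the direction from Beltrami to Reeb I would set $\beta=\iota_X g$, the metric dual $1$-form of $X$. Since $\beta$ is built from the $\rho$-invariant data $g$ and $X$ by a natural operation, $\rho_\gamma^*\beta=\beta$, so $\beta$ is $\rho$-invariant. The Beltrami hypothesis provides $\iota_X\,d\beta=0$ together with the nondegeneracy that makes $d\beta$ of maximal rank $2n$ (in dimension $3$ this is the statement $\beta\wedge d\beta=f\,|X|_g^2\,\mu_g$ with $f\neq 0$); as $\beta(X)=|X|_g^2>0$ everywhere, $\beta\wedge(d\beta)^n$ is a volume form and $\beta$ is a contact form. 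Then $\ker d\beta$ is the line spanned by the Reeb field $R_\beta$, and $\iota_X d\beta=0$ with $\beta(X)\neq 0$ force $X=|X|_g^2\,R_\beta$, a nowhere-zero rescaling. Finally $R_\beta$ is $\rho$-invariant: for each $\gamma$, the field $(\rho_\gamma)_*R_\beta$ satisfies $\iota_{(\rho_\gamma)_*R_\beta}\beta=1$ and $\iota_{(\rho_\gamma)_*R_\beta}d\beta=0$, the defining equations of the Reeb field of $\rho_\gamma^*\beta=\beta$, so by uniqueness it equals $R_\beta$. This direction uses only invariance of the data, not compactness.

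For the converse I must manufacture a $\rho$-invariant metric making the given invariant rescaling $X=hR$ (with $h$ nowhere zero and $\rho$-invariant, $R$ the Reeb field of $\alpha$) Beltrami. I would build $g$ block-diagonally with respect to $TM=\langle R\rangle\oplus\xi$: declare $R\perp\xi$, set $g(R,R)=1/h$, and put $g|_\xi=\phi\,g_0$, where $g_0(\cdot,\cdot)=d\alpha(\cdot,J\cdot)$ for a $d\alpha$-compatible almost complex structure $J$ on $\xi$ and $\phi>0$ a function to be fixed. (Replacing $\alpha$ by $-\alpha$ on the components where $h<0$, I may assume $h>0$ so that $g$ is positive definite.) These choices force $\iota_X g=\alpha$, hence $\beta:=\iota_X g=\alpha$ and $\iota_X d\beta=\iota_X d\alpha=h\,\iota_R d\alpha=0$, the rotational Beltrami condition. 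A short computation in an adapted coframe gives $\iota_X\mu_g=h^{1/2}\phi^{n}\,(d\alpha)^n/n!$ for the Riemannian volume $\mu_g$, so the choice $\phi=h^{-1/(2n)}$ makes $\iota_X\mu_g=(d\alpha)^n/n!$ closed, whence $\div_g X=0$. Thus $X$ is Beltrami for $g$.

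The main obstacle is to make this metric $\rho$-invariant, and this is precisely where compactness enters. Since $\rho_\gamma^*\alpha=\alpha$, the distribution $\xi$ is preserved and $G$ acts on the symplectic bundle $(\xi,d\alpha|_\xi)$ by symplectic automorphisms; the pieces $g(R,R)=1/h$ and $\phi=h^{-1/(2n)}$ are already $\rho$-invariant, so it suffices to produce a $\rho$-invariant compatible $g_0$ (equivalently a $\rho$-invariant $J$) on $\xi$. I would obtain it by averaging: starting from any compatible metric $g_*$ on $\xi$, form $\bar g=\int_G \rho_\gamma^* g_*\,d\gamma$ against the normalized Haar measure, finite exactly because $G$ is compact, to get a $\rho$-invariant metric, then apply the canonical polar-decomposition construction to the pair $(\bar g,d\alpha|_\xi)$ to extract $J$ and set $g_0(\cdot,\cdot)=d\alpha(\cdot,J\cdot)$. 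Because this construction is natural, it commutes with the $G$-action and yields a $\rho$-invariant $J$, hence a $\rho$-invariant $g_0$. Assembling the block-diagonal metric from the invariant pieces $1/h$, $\phi$, and $g_0$ then produces the desired $\rho$-invariant $g$, completing the argument.
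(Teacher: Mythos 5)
Your proposal is correct, and its forward direction (Beltrami to Reeb) coincides with the paper's proof of Theorem \ref{thm:equivariantcorrespondence}: both take $\beta=\iota_X g$, observe it is contact with $X$ Reeb-like (as in Proposition \ref{prop:beltramicontact}), and note that invariance is automatic since $\beta$ is built naturally from the invariant pair $(X,g)$. Your converse direction, however, genuinely differs in two respects. First, the invariance mechanism: the paper Haar-averages the entire fibre metric $d\alpha(\cdot,J\cdot)$ on $\ker\alpha$ and accepts that the result is merely \emph{some} invariant positive-definite form, no longer of compatible shape; you instead average an auxiliary metric on $\ker\alpha$ and then apply the polar-decomposition construction to $(\bar g,\,d\alpha|_{\ker\alpha})$, whose naturality yields a $\rho$-invariant compatible $J$ and hence an invariant $g_0$ whose volume form on $\ker\alpha$ is exactly $(d\alpha)^n/n!$. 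Second, the volume form: the paper exploits its definition of Beltrami relative to an arbitrary distinguished volume form and chooses $\mu=\tfrac1h\alpha\wedge(d\alpha)^n$, which eliminates all volume computations at the price of $\mu$ not being the Riemannian volume of the constructed metric; you insist on the Riemannian volume $\mu_g$ and obtain $\mathcal{L}_X\mu_g=0$ by tuning the conformal factor $\phi=h^{-1/(2n)}$ on $\ker\alpha$ --- a step that requires precisely the compatible-shape control your polar-decomposition argument provides, and which the paper's direct averaging would not give. The paper's route is shorter; yours proves the slightly stronger and more physical statement that $X$ is Beltrami with respect to the Riemannian volume form itself, with constant proportionality factor (equal to $n!$ in your normalization, which a constant rescaling of $\phi$ reduces to $1$). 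Both arguments correctly isolate the compactness of $G$ as entering only through the finiteness of the Haar average.
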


This result is particularly relevant when studying physical systems, whose symmetries are key in their treatment. In Section \ref{sec:kepler} we give a nice example of this equivariant correspondence, showing how it can be used to view the classical Kepler problem of celestial mechanics -- the motion of a satellite orbiting a planet -- along with its symmetries, as a stationary Beltrami solution to the Euler equations on an adapted Riemannian manifold. We call this fluid flow the Kepler-Euler flow. The example is based on the well-known Moser-Osipov-Belbruno regularization of the Kepler problem (see Theorem \ref{thm:Moser}), which provides a way to rescale time to regularize collisions (i.e., when the satellite collides with the planet at the origin) without otherwise affecting the dynamics of the system. After mapping the flow on energy levels of the Kepler problem to adequate manifolds (shown in Figure \ref{fig:stereographicprojections1} below), we can state the result about the Kepler-Euler flow as follows. 

\begin{theorem}[The Kepler-Euler flow]\label{thm:KeplerEuler}
The regularized Kepler flow on the $c$-energy level is a stationary Beltrami solution to the Euler equations on
\begin{itemize}
\item $S^*\mathbb{S}^2_c$ if $c < 0$,
\item $S^*\mathbb{R}^2$ if $c = 0$ and
\item $S^*\mathbb{H}^2_c$ if $c > 0$,
\end{itemize}
where $S^*$ denotes the spherical cotangent bundle. The Riemannian metrics are the lifts to the spherical cotangent bundles of the natural constant $(-2c)$-curvature metrics, and the symmetries of the Kepler problem correspond to isometries of the Riemannian metrics.
\end{theorem}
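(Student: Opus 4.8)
The plan is to factor the proof through two independent constructions glued along the regularized energy level: the Moser--Osipov--Belbruno regularization of Theorem \ref{thm:Moser}, which turns the Kepler dynamics into a geodesic flow on a surface of constant curvature, and the equivariant correspondence of Theorem \ref{thm:equivariantmirror}, which turns a (rescaled) Reeb flow into a Beltrami flow. First I would fix the energy $c$ and use Theorem \ref{thm:Moser} to identify the regularized Kepler flow on the $c$-energy level with the geodesic flow of the surface $N_c$ of constant curvature $-2c$, where $N_c=\mathbb{S}^2_c,\ \mathbb{R}^2,\ \mathbb{H}^2_c$ according to the sign of $c$. Under this identification the $c$-energy level becomes the spherical cotangent bundle $S^*N_c$, and the time reparametrization implicit in the regularization is recorded explicitly, so that the regularized Kepler field is seen as a nowhere-vanishing rescaling of the geodesic field.

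Next I would recognize the geodesic flow on $S^*N_c$ as a Reeb flow. On any Riemannian surface the restriction to the unit cosphere bundle of the canonical Liouville one-form is a contact form $\alpha$ whose Reeb vector field generates the cogeodesic flow; transporting this through the metric duality shows that the geodesic field is precisely the Reeb field of $\alpha$ (with $\alpha(R)=1$ and $\iota_R\,d\alpha=0$), so the regularized Kepler field is a rescaling of the Reeb field of the natural, isometry-invariant contact form $\alpha$. I would then lift the symmetries: every isometry of $N_c$ lifts canonically to a contactomorphism of $(S^*N_c,\alpha)$ preserving $\alpha$, and under the regularization these correspond to the symmetries of the Kepler problem, namely the rotational $SO(2)$ together with the hidden symmetries that assemble, at fixed energy, into $SO(3)$, the Euclidean group, or $SO(2,1)$ in the three regimes.

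With these structures in place I would apply Theorem \ref{thm:equivariantmirror}, in its Reeb-to-Beltrami direction, to the maximal compact subgroup $G$ of symmetries acting by lifted isometries on $S^*N_c$: since the regularized Kepler field is a $G$-invariant rescaling of the Reeb field of the $G$-invariant form $\alpha$, the theorem furnishes a $G$-invariant metric for which the field is Beltrami and $G$ acts by isometries. The crucial remaining point is that this metric may be taken to be the Sasaki lift of the constant-curvature metric on $N_c$. I would verify this by checking that the Sasaki metric meets the compatibility conditions underlying the construction of Theorem \ref{thm:EG} -- the field is orthogonal to the contact distribution and the induced volume form realizes the eigenvalue identity $\curl X=\lambda X$ -- exploiting that for constant curvature the unit tangent bundle is homogeneous ($\mathbb{RP}^3$, a Heisenberg-type geometry, and $\mathrm{PSL}(2,\mathbb{R})$ respectively) and the geodesic field is a left-invariant frame field on which $\curl$ acts diagonally; a direct computation in the flat case already gives $\curl X=-X$. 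Since the Sasaki lift is a natural construction, it is automatically invariant under the \emph{full} isometry group of $N_c$, including the noncompact factors present for $c\ge 0$ that lie outside the compactness hypothesis of Theorem \ref{thm:equivariantmirror}, which yields the final assertion that all symmetries of the Kepler problem become isometries.

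The main obstacle I anticipate is the careful bookkeeping of the time reparametrization together with the identification of the abstract metric produced by the correspondence with the concrete Sasaki lift: one must check that, in the regularized time, the field is genuinely the Reeb field (so that the residual rescaling is compatible with remaining an eigenfield of the Sasaki $\curl$) and that the curvature normalization indeed comes out to $-2c$. A secondary difficulty is the noncompactness of both the base and the symmetry group when $c\ge 0$, which forces the equivariant correspondence to be invoked only for the compact rotational subgroup, the full symmetry statement being recovered a posteriori from the naturality of the Sasaki construction.
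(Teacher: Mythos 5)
Your proposal is correct in substance and shares the paper's skeleton---regularize via Theorem \ref{thm:Moser}, recognize the regularized flow as the Reeb flow of the Liouville form on the spherical cotangent bundle (the paper's Lemma \ref{lemma:mechanicalcontact}), and then show that the explicit lifted metric is adapted---but the middle of your argument diverges from the paper's in two ways, one of which is a detour you should simply delete. The paper never invokes the equivariant correspondence (Theorem \ref{thm:equivariantmirror}) in this proof, and for good reason: that theorem produces \emph{some} averaged invariant metric with no control over what it is, so it can never be ``taken to be'' the Sasaki/Mok lift; as you yourself concede, all the content lies in the direct verification that the lift is adapted, so the appeal to Theorem \ref{thm:equivariantmirror}, and the ensuing complication about compact versus noncompact symmetry groups, does no work. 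The symmetry claim is obtained exactly as in your final remark: the lift construction is natural, so isometries of the base lift to isometries of $S^*g$, and the identification of Kepler symmetries with base isometries is classical since Moser. Where you verify adaptedness by exploiting homogeneity of the three model geometries ($\mathbb{R}\mathbb{P}^3$, the flat case, $PSL(2,\mathbb{R})$) and the fact that $\curl$ acts diagonally on invariant frames, the paper instead proves a cleaner and strictly more general pair of statements: Proposition \ref{prop:adaptedmetric} (if the Reeb field $R$ is $g$-orthogonal to $\ker\alpha$ and $|R|_g$ is constant, then $g$ is adapted, with $\curl R = R$ for the volume $\alpha\wedge(d\alpha)^n$) and Theorem \ref{cotangent lift} (for \emph{any} base metric, the geodesic Reeb field on $S^*M$ satisfies these two conditions with respect to the cotangent lift $S^*g$; the computation is short because the geodesic field has vanishing vertical component, so only the base term of the lift metric contributes, giving $S^*g(R,Y)=g(T\pi R,T\pi Y)=-y_i\alpha^i=0$ for $Y\in\ker\alpha$ and $|R|^2_{S^*g}=g^{ij}y_iy_j=1$ on $S^*M$). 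This buys generality: every kinetic Hamiltonian flow on every spherical cotangent bundle is Beltrami for the lift metric, not only the three constant-curvature models, whereas your homogeneous-space computation must be redone separately in each geometry (though it does buy explicit eigenvalue formulas).

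Two minor inaccuracies to repair if you carry out your version: the spherical cotangent bundle of the flat plane with the lifted metric is the flat Euclidean geometry of the group $SE(2)$, not a ``Heisenberg-type'' (Nil) geometry; and your sign $\curl X=-X$ in the flat case is an orientation artifact---with the contact volume $\alpha\wedge d\alpha$ used in the paper the eigenvalue is $+1$---so fix a volume form before asserting the eigenvalue.
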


\begin{remark}
In the theorem above, the flow lines are lifted geodesics. The Kepler flow on the plane is recovered from the natural stereographic projections of the respective surfaces, or from the involution $x\mapsto\frac{2x}{|x|^2}$ when $c=0$, as shown in Figure \ref{fig:stereographicprojections1}. We note that the fact that symmetries of the Kepler problem correspond to symmetries of the respective metrics was already well-known since Moser's regularization.
\end{remark}
\begin{figure}[H]
    \centering
    \input{StereographicProjections.tex}
    \caption{The maps which induce the Riemannian metrics whose geodesic flow is the regularized Kepler flow.}
    \label{fig:stereographicprojections1}
\end{figure}

This new perspective on the Kepler problem evokes the metaphoric image of a fluid moving planets and stars through space, but it also raises the more \textit{down to Earth} question, \textit{what systems of classical mechanics are solutions to fluid equations?} Indeed, at the end of Section \ref{sec:equivariant} we use Theorem \ref{thm:equivariantmirror} to obtain the following corollary.

\begin{corollary}
    The flow of any mechanical Hamiltonian system (i.e., a system where the Hamiltonian is the sum of kinetic and potential energies) on an energy level above the maximum value of the potential is a stationary Beltrami solution to the Euler equations on the energy level with some adapted Riemannian metric. Symmetries of the system translate to isometries of the metric.
\end{corollary}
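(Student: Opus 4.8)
The plan is to realise the supercritical energy level as a hypersurface of contact type in the cotangent bundle, identify the Hamiltonian flow on it with a nonvanishing rescaling of a Reeb field, and then feed this into the Reeb-to-Beltrami direction of Theorem \ref{thm:equivariantmirror}. Concretely, I would write the system on $T^*Q$ with canonical symplectic form $\omega = d\lambda$, where $\lambda$ is the tautological (Liouville) one-form, and take $H = K + V$ with $K$ fibrewise quadratic (the kinetic energy of a metric on $Q$) and $V = V(q)$. Fix $c > \max V$ and set $\Sigma_c = \{H = c\}$. The first step is to check that $\Sigma_c$ is a smooth hypersurface: the fibrewise radial vector field $Y$, characterised by $\iota_Y\omega = \lambda$ and $\mathcal{L}_Y\omega = \omega$, satisfies $Y(K) = 2K$ by Euler's identity for degree-two homogeneous functions while $Y(V) = 0$, so $Y(H) = 2K = 2(c - V) > 0$ on $\Sigma_c$. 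In particular $dH \neq 0$ there, so $c$ is a regular value.

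The same computation shows $Y$ is transverse to $\Sigma_c$, so $\Sigma_c$ is of contact type and $\alpha := \lambda|_{\Sigma_c}$ is a contact form with $d\alpha = \omega|_{\Sigma_c}$. Next I would identify the dynamics: the Hamiltonian field $X_H$, defined by $\iota_{X_H}\omega = -dH$, is tangent to $\Sigma_c$ and spans the characteristic line field $\ker(\omega|_{\Sigma_c}) = \ker d\alpha$, which is precisely the line field spanned by the Reeb vector field $R$. Hence $R$ and $X_H$ are everywhere parallel, and the proportionality factor is controlled by $\alpha(X_H) = \omega(Y, X_H) = dH(Y) = 2(c-V)$, strictly positive on $\Sigma_c$. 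Therefore $X_H|_{\Sigma_c} = 2(c-V)\,R$ is a nonvanishing rescaling of the Reeb field of $\alpha$.

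For the equivariance I would take a symmetry to be a compact Lie group $G$ acting on $Q$ by isometries of the kinetic metric and preserving $V$. Its cotangent lift acts on $T^*Q$ preserving $\lambda$ (a canonical property of cotangent lifts), hence $\omega$ and $Y$, and preserves $H$; it therefore preserves $\Sigma_c$, restricts to a $\rho$-invariant action there, fixes $\alpha$, and commutes with $X_H$. All the data $(\alpha, X_H)$ are thus $\rho$-invariant, and applying the Reeb-to-Beltrami half of Theorem \ref{thm:equivariantmirror} produces a $\rho$-invariant Riemannian metric on $\Sigma_c$ for which $X_H$ is Beltrami, with the symmetries acting as isometries.

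The substantive input is the classical contact-type property of supercritical energy levels together with the clean identification of the characteristic foliation with the Reeb and Hamiltonian directions; once these are in place the corollary is immediate. The point requiring genuine care is the compactness hypothesis on the symmetry group, since this is exactly what Theorem \ref{thm:equivariantmirror} (and the averaging underlying it) needs; for a noncompact symmetry group only the non-equivariant adapted metric would be guaranteed.
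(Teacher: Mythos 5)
Your proposal is correct and follows essentially the same route as the paper: establish that the Liouville form restricts to a contact form on the supercritical level with $X_H$ Reeb-like (the paper's Lemma \ref{lemma:mechanicalcontact}), then apply the Reeb-to-Beltrami direction of the equivariant correspondence (Theorem \ref{thm:equivariantcorrespondence}). Your contact-type argument via the fibrewise Liouville vector field $Y$ is just a repackaging of the paper's positivity check, since $\alpha(X_H)=\omega(Y,X_H)=Y(H)=2K>0$ is the same computation.
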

In particular, we can reinterpret the $n$-body problem on positive energies as a stationary fluid flow.
\begin{corollary}
    The flow of the $n$-body problem on a positive energy level is a stationary Beltrami solution to the Euler equations for some adapted Riemannian metric.
\end{corollary}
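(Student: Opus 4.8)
The plan is to recognize the $n$-body problem as a mechanical Hamiltonian system of exactly the kind covered by the previous corollary, and then to check the single inequality that its hypothesis requires. First I would fix the phase space: the configuration space is the collision-free open set $Q=\{(q_1,\dots,q_n)\in(\mathbb{R}^3)^n : q_i\neq q_j \text{ for } i\neq j\}$, the phase space is $T^*Q$ with its canonical symplectic form $\omega=d\lambda$ and tautological one-form $\lambda=\sum_i p_i\,dq_i$, and the Hamiltonian is
\[
H(q,p)=\sum_{i=1}^n\frac{|p_i|^2}{2m_i}-\sum_{i<j}\frac{Gm_im_j}{|q_i-q_j|}=:T(p)+V(q),
\]
which is manifestly of the form (kinetic energy) $+$ (potential energy), with $T$ a positive-definite quadratic form on each cotangent fibre and $V$ depending only on $q$.

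The key step is the elementary observation that $V$ is bounded above. Each summand $-Gm_im_j/|q_i-q_j|$ is strictly negative, so $V(q)<0$ for every $q\in Q$, while $V(q)\to 0^-$ as the mutual distances grow; hence $\sup_Q V=0$ (never attained). Consequently any positive energy level $\Sigma_c=\{H=c\}$ with $c>0$ satisfies $c>0=\sup_Q V\ge V(q)$ everywhere, so on $\Sigma_c$ one has $T=c-V>c>0$: the momenta never vanish and $\Sigma_c$ lies strictly above the maximum of the potential. This is precisely the hypothesis of the preceding corollary, which then applies and produces an adapted Riemannian metric for which the Hamiltonian flow on $\Sigma_c$ is a stationary Beltrami solution of the Euler equations. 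Moreover, the diagonal action of $SO(3)$ on positions and momenta is a \emph{compact} Lie group action preserving both $H$ and $\lambda$, so the equivariant Theorem \ref{thm:equivariantmirror} upgrades this rotational symmetry to an isometry of the adapted metric.

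The point needing care is that $\Sigma_c$ is neither compact (bodies may escape to infinity or approach collisions) nor does $V$ attain a maximum, so the phrase ``above the maximum of the potential'' must be read as $c>V(q)$ for all $q$, equivalently $T>0$ on $\Sigma_c$. This is exactly the information the correspondence uses: the Liouville field $Y=\sum_i p_i\,\partial_{p_i}$ satisfies, by Euler's relation for the degree-two form $T$,
\[
dH(Y)=\sum_i p_i\,\partial_{p_i}(T+V)=2T=2(c-V)>0 \quad\text{on }\Sigma_c,
\]
so $Y$ is transverse to $\Sigma_c$, the hypersurface is of contact type, and $\alpha:=\lambda|_{\Sigma_c}$ is a contact form whose Reeb field is a reparametrization of $X_H|_{\Sigma_c}$. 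Since only this transversality is invoked, no compactness of $\Sigma_c$ is required, and the compactness hypothesis in Theorem \ref{thm:equivariantmirror} is needed only for the group $SO(3)$, which is indeed compact. Thus the whole conclusion follows from the strict positivity of the kinetic energy, which is the sole consequence of $c>0$ that we exploit.
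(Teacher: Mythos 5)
Your proposal is correct and takes essentially the same route as the paper: you identify the $n$-body Hamiltonian as a mechanical Hamiltonian with strictly negative potential, note that any $c>0$ therefore lies above the potential, and invoke the mechanical-Hamiltonian corollary (in the paper, Lemma \ref{lemma:mechanicalcontact} combined with Theorem \ref{thm:equivariantcorrespondence}), where your Liouville-field transversality computation $dH(Y)=2T>0$ is the same check as the paper's $\alpha(X_H)=K>0$ since $\alpha(X_H)=\omega(Y,X_H)=dH(Y)$. Your explicit reading of ``above the maximum of the potential'' as $c>\sup V=0$ (a supremum that is never attained) is a careful point the paper leaves implicit, but it does not change the argument.
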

We remark that the metrics that make Beltrami the aforementioned Hamiltonian systems on some energy levels are not prescribed a priori, and it is generally very hard to understand their curvature.

In the next section we briefly introduce Reeb and Beltrami vector fields and proceed with the proofs of Theorems \ref{thm:equivariantmirror} and \ref{thm:KeplerEuler} in Sections \ref{sec:equivariant} and \ref{sec:kepler} below.
\section{Basic results}
 \subsection{Contact geometry and Reeb vector fields}\label{sec:contact}
 
 Recall that a symplectic form on an even-dimensional manifold is a closed, nondegenerate differential 2-form $\omega$, and that a Hamiltonian system on a symplectic manifold $(M,\omega)$ with Hamiltonian function $H\in C^\infty(M,\mathbb{R})$ is given by the vector field $X_H$ such that
\begin{equation*}
	\iota_{X_H} \omega = \omega(X_H, \cdot) = -dH.  
\end{equation*}
For example, the Kepler problem is a Hamiltonian system on $(T^*(\mathbb{R}^2\setminus \{0\}),\omega)$, where $\omega = dp_1\wedge dq_1 + dp_2\wedge dq_2$ is the standard symplectic form, and the Hamiltonian function is given by
\begin{equation}
    H(q,p) = \frac{|p|^2}{2} - \frac{1}{|q|}.
\end{equation}

One of the fundamental problems when studying a Hamiltonian system is to describe its periodic orbits. For example, for the $3$-body problem of celestial mechanics (the motion of three massive bodies interacting through gravitational force), only in very particular cases are periodic orbits known to exist (for a thorough exposition of work on the three-body problem since Poincaré we recommend \cite{C15}). Since energy is conserved along the flow of a Hamiltonian system, we can immediately restrict our search for periodic orbits to energy levels of the Hamiltonian to reduce the system by one dimension. 

Reeb vector fields come into play when energy levels carry a contact form which is a primitive of the symplectic form.

\begin{definition}[Contact form]\label{def:contact}
A \emph{contact form} on a $(2n+1)$-dimensional manifold $\Sigma$ is a differential 1-form $\alpha$ such that $\alpha\wedge (d\alpha)^{n} \neq 0$, or equivalently,
\begin{equation*}
T \Sigma = \ker\alpha \oplus \ker d\alpha.
\end{equation*}
The \emph{Reeb vector field} associated to $\alpha$ is the unique vector field $R$ on $\Sigma$ such that
\begin{itemize}
\item $\langle R\rangle = \ker d\alpha$ and
\item $\alpha(R) = 1$.
\end{itemize}
We say that a vector field $X$ is \emph{Reeb-like}  if $X = fR$ for some smooth factor $f>0$.
\end{definition}

If a contact form on a regular energy level $\Sigma = H^{-1}(c)$ is a primitive of the symplectic form, that is, $d\alpha = \omega_{|\Sigma}$, then the Reeb vector field is parallel to the Hamiltonian vector field $X_H$ on the energy level,
\begin{equation*}
  \langle R\rangle = \ker d\alpha = \ker \omega_{|\Sigma} = \langle X_H\rangle.
\end{equation*}
Therefore, the Hamiltonian vector field has periodic orbits if and only if the Reeb vector field does. This is relevant because Weinstein's conjecture states that every Reeb vector field on a closed manifold has a closed orbit, so contact geometry and Reeb vector fields can be useful to prove existence of periodic orbits of Hamiltonian systems. This is precisely what is done in \cite{AFKP12} to prove that the regularized circular, planar, restricted three-body problem has periodic orbits on compactified energy levels below the first Lagrange point $L_1$.

When energy levels are not compact, saying something interesting about the dynamics becomes more complicated. One can either resort to results such as Berg-Pasquotto-Vandervorst's  \cite{BPV09}, or compactify the energy level, somehow extending the dynamics in a physically meaningful way. This last option, especially when compactifying unbounded energy levels with contact forms, often gives rise to singularities in the extended contact form along the added boundary. This is the case, for example, when positive energy levels of the restricted 3-body problem are compactified using a McGehee change of coordinates \cite{MOP22}. Studying singular contact forms and finding an equivalent to Weinstein's conjecture in this setting thus becomes an important question. In this line of research, we have shown that for a generic set within a class of so-called $b$-contact forms (contact forms having a logarithmic singularity along a hypersurface), one can give lower bounds on the number of escape orbits of the system \cite{MOP22, FMOP23}.

\subsection{Hydrodynamics and Beltrami vector fields}\label{sec:beltrami} 

We now move on to a subject at first glance completely unrelated to celestial mechanics, hydrodynamics. The mathematical model of a fluid is that it is composed of \emph{fluid particles}, each with a certain velocity which can be jointly written as a vector field $X$ on a smooth manifold $M$ in which the fluid is contained. We call this vector field $X$ the \emph{velocity field}, and it can depend on time and position on $M$, so $X=X(t,x)$. Fluid equations are equations that such a velocity field must satisfy, and the simplest such equations are the \emph{Euler equations}. To formulate them, a Riemannian metric $g$ and a distinguished volume form $\mu$ on $M$, which for most physical applications is the Riemannian volume form, are required. We assume that the density of the fluid is constant and equal to one. Applying Newton's second law to the trajectory of a fluid particle $x(t)$ we get
\begin{equation*}
\frac{d^2}{dt^2}x(t) = F \Rightarrow \frac{d}{dt}X(t,x(t)) = \frac{\partial X}{\partial t} + \nabla_X X = F,
\end{equation*}
where  $\nabla_X$ is the Levi-Civita connection given by $g$ and $F$ is the force acting on the particle. The incompressible Euler equations are obtained assuming that the only force acting on the fluid particles comes from the internal pressures of the fluid, so that if $P$ is the pressure at each point in the fluid, $F=-\nabla P$. Furthermore, it is assumed that the fluid is incompressible, which translates to $\div (X)\mu = \mathcal{L}_{X}\mu = 0$. In this article we are concerned with the special case in which the velocity field $X$ is stationary, so $\frac{\partial X}{\partial t}=0$. The stationary incompressible Euler equations are 
\begin{equation*}
\begin{cases}
\nabla_X X = -\nabla P \\
\mathcal{L}_{X} \mu = 0 .
\end{cases}
\end{equation*}

It is convenient to reformulate the stationary Euler equations in the language of differential forms, see e.g. \cite{AK98}. To do so, we dualize the equations by the Riemannian metric, thus obtaining
\begin{equation}\label{eq:eulereqs}
\begin{cases}
\iota_X d\iota_X g = -dB\\
\mathcal{L}_{X} \mu = 0
\end{cases},
\end{equation}
where $B = P + \frac{1}{2}||X||^2$ is known as the \emph{Bernoulli function}. The last remaining object to be introduced in this section is the \emph{curl} of a vector field, which in Euclidean $\mathbb{R}^3$ is the usual curl operator,  $\curl X = \nabla\times X$. On general Riemannian manifolds and in the language of differential forms, the curl operator is generalized to 

\begin{definition}[Curl operator]
Given a $(2n+1)$-dimensional Riemannian manifold with distinguished volume form $(M,g,\mu)$, the \emph{curl} of a vector field $X$ with respect to $g$ and $\mu$ is the unique vector field $\curl X$ that satisfies
\begin{equation*}
\iota_{\curl  X}\mu = (d\iota_X g)^{n}.
\end{equation*}
\end{definition}

When $X$ is a velocity field, $\curl X$ is commonly known as the \textit{vorticity field} of the flow. A particularly relevant class of stationary solutions to the Euler equations are velocity fields that (in addition to being divergence-free) are parallel to their vorticity field: $\curl X = fX$. In this case, the Bernoulli function is constant $B=c$.

\begin{definition}[Beltrami vector field]
A divergence-free (with respect to the volume form~$\mu$) vector field $X$ on $(M,g,\mu)$ is a \emph{Beltrami vector field} if $\curl  X = fX$. We call a Beltrami field \emph{nonsingular} if neither it nor $f$ vanish at any point. 
\end{definition}

The following proposition is standard and shows that there is a natural contact form associated to a Beltrami field. We expand on this observation in the next section. 

\begin{proposition}\label{prop:beltramicontact}
    A divergence-free nonsingular Beltrami vector field $X$ on $(M,g,\mu)$ is a solution to the stationary Euler equations. Furthermore, $\iota_X g$ is a contact form on $M$.
\end{proposition}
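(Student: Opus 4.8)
The plan is to verify the two claims of Proposition~\ref{prop:beltramicontact} in turn, working throughout on a $(2n+1)$-dimensional manifold $(M,g,\mu)$ with a nonsingular Beltrami field $X$, i.e.\ $\curl X = fX$ with $f$ and $X$ nowhere vanishing, and $\mathcal{L}_X\mu = 0$. The central computational object is the one-form $\iota_X g$ and its exterior derivative $d\iota_X g$; everything reduces to understanding these.

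\emph{That $X$ solves the stationary Euler equations.} First I would unwind the defining relation of the curl. By definition, $\iota_{\curl X}\mu = (d\iota_X g)^n$, and since $\curl X = fX$ we get $(d\iota_X g)^n = f\,\iota_X\mu$. To check the first Euler equation in \eqref{eq:eulereqs}, I compute $\iota_X d\iota_X g$ and show it is exact. The key step is to contract the Beltrami relation: I want to show $\iota_X\!\big((d\iota_X g)^n\big) = 0$, which follows because $\iota_X(f\iota_X\mu) = f\,\iota_X\iota_X\mu = 0$ by the anticommutativity of interior products ($\iota_X\iota_X = 0$). The real content is translating this into the statement $\iota_X d\iota_X g = -dB$ with $B$ constant; here I would use that $\iota_X d\iota_X g$ and the Bernoulli gradient are related, and invoke the remark already made in the excerpt that for a Beltrami field the Bernoulli function is constant, so $-dB = 0$. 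Thus I would argue directly that $\iota_X d\iota_X g = 0$: since $\curl X$ is proportional to $X$, the two-form $d\iota_X g$ annihilates $X$ in the sense that $\iota_X d\iota_X g = 0$, which I expect to extract from the pointwise linear-algebra relationship between $d\iota_X g$ and its top power. The divergence-free condition $\mathcal{L}_X\mu = 0$ is assumed outright, so the second Euler equation holds for free.

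\emph{That $\iota_X g$ is a contact form.} Set $\alpha = \iota_X g$. I must show $\alpha\wedge(d\alpha)^n \neq 0$ everywhere. First, $\alpha(X) = g(X,X) = \lVert X\rVert^2 > 0$ since $X$ is nonvanishing, so $\alpha$ itself is nonzero at every point. Next I would compute $\alpha\wedge(d\alpha)^n$ by pairing it against $\mu$. Contracting with $X$ and using $(d\alpha)^n = f\,\iota_X\mu$, I expect
\begin{equation*}
\iota_X\big(\alpha\wedge(d\alpha)^n\big) = \big(\iota_X\alpha\big)(d\alpha)^n - \alpha\wedge\iota_X(d\alpha)^n = \lVert X\rVert^2\, f\,\iota_X\mu,
\end{equation*}
where the second term drops out because $\iota_X(d\alpha)^n = \iota_X(f\iota_X\mu) = 0$. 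Since $\lVert X\rVert^2 f$ is nowhere zero (both factors are nonvanishing by the nonsingularity hypothesis) and $\iota_X\mu \neq 0$ wherever $X\neq 0$, the form $\alpha\wedge(d\alpha)^n$ is nonzero on the hyperplane complementary to $X$; combined with $\alpha(X)\neq 0$ this forces $\alpha\wedge(d\alpha)^n\neq 0$ as a top-degree form, establishing the contact condition.

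The step I expect to be the main obstacle is the bookkeeping in the contraction identities, specifically making precise the claim that $\iota_X d\iota_X g = 0$ and that $\alpha\wedge(d\alpha)^n$ is a nonvanishing volume form. The subtlety is that $(d\alpha)^n = f\iota_X\mu$ is only a partial statement about $d\alpha$; one must be careful that the pointwise rank of $d\alpha$ is exactly $2n$ on the kernel of $\alpha$, which is what genuinely guarantees the contact (maximal non-integrability) condition rather than merely $\alpha\wedge(d\alpha)^n$ being formally nonzero at isolated relations. I would handle this by working pointwise: at a fixed $p\in M$, decompose $T_pM = \langle X\rangle \oplus \langle X\rangle^{\perp_g}$ and verify that the nonvanishing of $f$ forces $d\alpha$ to be symplectic on $\langle X\rangle^{\perp_g}$, which is the cleanest route to the contact property.
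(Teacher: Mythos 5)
Your proposal is correct and takes essentially the same route as the paper's proof: both use the curl relation $(d\iota_X g)^n = f\,\iota_X\mu$ to deduce the contact condition from the nonvanishing of $f$ and $X$, and both obtain the stationary Euler equation from $\iota_X (d\iota_X g)^n = 0$ together with the pointwise nondegeneracy of $d\iota_X g$ on $\ker \iota_X g$, which forces $\iota_X d\iota_X g = 0$ (i.e.\ constant Bernoulli function). One small caution for the write-up: drop the appeal to the paper's remark that Beltrami fields have constant Bernoulli function, since that remark is a consequence of this very proposition and invoking it would be circular; your direct argument, which you correctly fall back on, is the one to keep.
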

\begin{proof}
  Since $X$ is Beltrami, noticing that $\mu = h\mu_g$ for some factor $h>0$ ($\mu_g$ is the Riemannian volume), we have
    \begin{equation*}
     \iota_X g \wedge (d\iota_X g)^n = f\iota_X g\wedge\iota_X \mu = fh\iota_X g\wedge\iota_X \mu_g = fh||X||^2\mu_g\neq 0.
    \end{equation*}
   This means that $\iota_X g$ is a contact form. Moreover
        \begin{equation}\label{eq:xcontact}
    \iota_X (d\iota_X g)^n = \iota_X\iota_{\curl  X}\mu =           \iota_X\iota_{fX}\mu = 0,
    \end{equation}
and since $\iota_Xg$ is a contact form, it is obvious that $X$ is in the kernel of $d\iota_Xg$, i.e.,  $\iota_X d\iota_X g = 0$. This shows that $X$ is a solution to the stationary Euler equations (\ref{eq:eulereqs}) on $(M,g,\mu)$ with constant Bernoulli function.
\end{proof}

\begin{remark}
Assume that $\curl X = fX$ for some nonvanishing factor $f$ and volume form $\mu$, and we do not assume that $X$ preserves the volume form $\mu$. Then 
    \begin{equation}\label{eq:xdivergence}
        \mathcal{L}_X \mu = d\iota_X \mu = d(\frac{1}{f}\iota_{\curl X}\mu) = d(\frac{1}{f}(d\iota_Xg)^n) = d\frac{1}{f}\wedge (d\iota_X g)^n,
    \end{equation}
which is not necessarily zero. However, since the proportionality function $f$ is nonvanishing, it is obvious that $X$ preserves the volume form $\tilde{\mu} = f\mu$. Additionally, if we denote by $\widetilde{\curl}$ the curl operator computed with the volume form $\tilde\mu$, it is elementary to check that
    \begin{equation*}
         \widetilde{\curl }X = X.
    \end{equation*}
    Accordingly, $X$ is a Beltrami field with constant proportionality factor for the volume form~$\tilde\mu$.
    
\end{remark}

For a much more comprehensive introduction to stationary Euler flows, we recommend \cite{A66, AK98, EP18}.

%%%%%%%%%%%%%%%%%%%%%%%%%%%%%%%%%%%%%%%%%%%%%%%%%%%%
		%		THE EQUIVARIANT MIRROR 		%
%%%%%%%%%%%%%%%%%%%%%%%%%%%%%%%%%%%%%%%%%%%%%%%%%%%%

\section{The Equivariant Correspondence}\label{sec:equivariant}
In this section we show that Etnyre and Ghrist's correspondence between Reeb and Beltrami vector fields of Theorem \ref{thm:EG} can be made to preserve symmetries of the metrics or contact forms.

\begin{theorem}[The equivariant correspondence]\label{thm:equivariantcorrespondence}
Let $M$ be a $(2n+1)$-dimensional smooth manifold and $\rho : G \times M \rightarrow M$ a compact Lie group action on $M$. For each $\rho$-invariant nonsingular Beltrami field $(X,g)$ there is a $\rho$-invariant contact form for which $X$ is Reeb-like. Conversely, for each $\rho$-invariant Reeb-like field $(X,\alpha)$ there is a $\rho$-invariant Riemannian metric for which $X$ is nonsingular Beltrami. 
\end{theorem}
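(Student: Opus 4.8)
The plan is to treat the two directions separately, leaning on the \emph{naturality} of the constructions of Section~\ref{sec:beltrami} (so that invariance is inherited for free) and invoking averaging over the compact group $G$ only at the single point where a genuine choice is made. For the forward direction (Beltrami $\Rightarrow$ contact) I would not average at all. Given a $\rho$-invariant nonsingular Beltrami pair $(X,g)$, set $\alpha := \iota_X g$. Proposition~\ref{prop:beltramicontact} already shows that $\alpha$ is a contact form and that, since $\iota_X d\alpha = 0$ and $\alpha(X)=\|X\|^2>0$, the field $X = \|X\|^2 R$ is Reeb-like. The only thing to add is invariance, and this is automatic: because lowering is natural under diffeomorphisms, for each $h\in G$ one has $\rho_h^*\alpha = \iota_{\rho_h^* X}(\rho_h^* g) = \iota_X g = \alpha$, using $\rho_h^* X = X$ and $\rho_h^* g = g$. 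Hence $\alpha$ is $\rho$-invariant and this half is immediate.

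For the converse (Reeb-like $\Rightarrow$ Beltrami), let $X = fR$ with $f>0$ and $\alpha$ a $\rho$-invariant contact form. I would first record that all the induced data are invariant: the Reeb field $R$ is invariant since it is pinned down by the $\rho$-invariant conditions $\alpha(R)=1$ and $\iota_R d\alpha = 0$; the contact distribution $\xi = \ker\alpha$ is an invariant complement to $\langle R\rangle$; and $f = \alpha(X)$ is an invariant function. The key structural observation is that the whole Beltrami condition will depend only on the dual $1$-form $\iota_X g$, and I would arrange $\iota_X g = \alpha$ by imposing $g(R,R)=1/f$ together with $R\perp_g \xi$. These force $g(R,\cdot)=\tfrac1f\alpha$ and hence $\iota_X g = f\,g(R,\cdot) = \alpha$ \emph{irrespective of the restriction $g|_\xi$}. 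Thus $g|_\xi$ is completely unconstrained, and I would fix it by averaging: pick any metric $g_\xi^0$ on the invariant sub-bundle $\xi$ and set $g_\xi = \int_G \rho_h^* g_\xi^0\, dh$ against the normalized Haar measure, which is positive-definite and $\rho$-invariant. Building $g$ from the invariant splitting $TM=\langle R\rangle \oplus \xi$, the normalization $g(R,R)=1/f$, and $g_\xi$ then produces a $\rho$-invariant metric that still satisfies $\iota_X g = \alpha$.

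It remains to verify the Beltrami property for a suitable volume form. Taking $\mu := \alpha\wedge(d\alpha)^n$, which is $\rho$-invariant and nonvanishing because $\alpha$ is contact, the identity $\iota_{\curl X}\mu = (d\iota_X g)^n = (d\alpha)^n = \iota_R\mu$ yields $\curl X = R = \tfrac1f X$, so $X$ is an eigenfield of curl. To promote this to a genuine divergence-free Beltrami field I would pass, exactly as in the Remark following Proposition~\ref{prop:beltramicontact}, to the rescaled (still $\rho$-invariant) volume form $\tilde\mu := \tfrac1f\mu$; then $\mathcal{L}_X\tilde\mu = d\iota_X\tilde\mu = d(d\alpha)^n = 0$ and $\widetilde{\curl}\,X = X$, exhibiting $X$ as a nonsingular Beltrami field for $(g,\tilde\mu)$.

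I expect the main point to be conceptual rather than computational: the crux is isolating precisely which data are forced and which are free, and recognizing that $\iota_X g$—and therefore the entire Beltrami condition—is insensitive to $g|_\xi$. This is what makes averaging harmless, since contact forms are generally \emph{not} preserved under averaging whereas metrics are, so one must average only the contact-geometrically irrelevant part of $g$. If one instead insisted that $\tilde\mu$ be the Riemannian volume $\mu_g$, then $g|_\xi$ would inherit a determinant constraint and one would have to average a compatible almost-complex structure on $(\xi, d\alpha|_\xi)$ via the $G$-equivariant polar decomposition; allowing a general distinguished volume form, as in the definitions of Section~\ref{sec:beltrami}, neatly sidesteps this complication.
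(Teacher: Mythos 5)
Your proof is correct and follows essentially the same route as the paper: the forward direction is identical (invariance of $\iota_X g$ is automatic by naturality), and in the converse you use the same decomposition $g = \tfrac{1}{f}\alpha\otimes\alpha + (\text{metric on } \ker\alpha)$, Haar averaging of the only non-canonical ingredient, and the same final volume form $\tfrac{1}{f}\alpha\wedge(d\alpha)^n$ (the paper reaches $\curl X = X$ in one step with this $\mu$, while you first get $\curl X = \tfrac{1}{f}X$ for $\alpha\wedge(d\alpha)^n$ and then rescale via the Remark). The single deviation is that the paper fixes the $\ker\alpha$-part as $d\alpha(\cdot,J\cdot)$ for a compatible almost complex structure $J$ (following Ghrist) and then averages, whereas you average an arbitrary bundle metric on the invariant sub-bundle $\ker\alpha$ --- a harmless, slightly leaner choice, since, as you correctly isolate, the Beltrami condition only uses $\iota_X g = \alpha$ and is insensitive to $g|_{\ker\alpha}$.
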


When we refer to Reeb and Beltrami vector fields, we refer to the vector field \emph{and the subjacent geometric structure}, so that when we say that $(X,g)$ is $\rho$-invariant, for example, we mean that $\rho_{\sigma*}X = X$ and $\rho_\sigma^*g = g$ for every $\sigma\in G$. 

Before continuing with the proof, we recall the definitions of a couple of objects that will be used therein. The \textit{Haar measure} on a compact Lie group $G$ is the unique measure $\eta$ on $G$ such that $\eta$ is left invariant and $\eta(G)=1$.

\begin{remark}
Given a compact Lie group action $\rho : G\times M \rightarrow M$, it is standard that the Haar measure provides a way to \emph{average} tensor fields on $M$ to make them invariant by the action. Indeed, if $T$ is a tensor field on $M$, the tensor field defined pointwise as
\begin{equation*}
\tilde{T}_p = \int_{\sigma\in G} (\rho_\sigma^*T)_p d\eta
\end{equation*}
is a  $\rho$-invariant tensor field. The integral is taken over all $\sigma\in G$ and with respect to the Haar measure.
\end{remark}

In particular, we will use this averaging procedure for Riemannian metrics. Since the space of positive definite symmetric bilinear forms is convex, it is well known that averaging a Riemannian metric by the procedure above yields a Riemannian metric that is invariant by the group action.

Recall, also, that an almost complex structure on a vector bundle $E\rightarrow M$ over a smooth manifold $M$ is a section $J:M\rightarrow End(E)$ such that $J^2 = -Id$. Almost complex structures establish a deep connection between complex and symplectic geometry. However, the only property we will use is that they provide a way to construct Riemannian metrics from nondegenerate 2-forms: For every nondegenerate differential 2-form $\omega$ on a vector bundle $E\rightarrow M$ there is an almost complex structure $J$ such that $g(\cdot,\cdot) = \omega(\cdot, J\cdot)$ is a positive definite, symmetric bilinear form on $E$ (see \cite{MS17}).

\begin{proof}[Proof of Theorem \ref{thm:equivariantcorrespondence}]
Let $(X,g)$ be a $\rho$-invariant nonsingular Beltrami field. The construction of a $\rho$-invariant contact form is the one in the proof of Proposition \ref{prop:beltramicontact}. Invariance of $X$ and $\alpha$ are automatic because $X$ was already taken to be invariant and $\alpha$ is a contraction of the invariant metric $g$ with an invariant vector field. Let us now prove the equivariance of the converse direction.

Let $(X,\alpha)$ be a $\rho$-invariant Reeb-like field, with $X=hR$, where $R$ is the Reeb field with respect to $\alpha$ and $h>0$. Since $X$ is $\rho$-invariant, $h$ must also be so, as $R$ is invariant by the invariance of $\alpha$. Using the splitting of $TM$ into $\ker \alpha\oplus \ker d\alpha$, we can define a Riemannian metric on each component and impose mutual orthogonality to naturally construct a Riemannian metric following \cite{G07}. Consider 
\begin{equation*}
\tilde{g} = \frac{1}{h}\alpha\otimes\alpha + d\alpha(\cdot,J\cdot),
\end{equation*}
where $J$ is an almost complex structure on the vector bundle $\ker\alpha \rightarrow M$ (the complex structure) making $d\alpha(\cdot,J\cdot)$ a Riemannian metric on this bundle. Note that in general $\tilde{g}$ is not $\rho$-invariant because $J$ is not. Nevertheless, we can average this metric using the Haar measure on $G$ to obtain a $\rho$-invariant metric
\begin{equation*}
g = \int_G \rho^*(\tilde{g})d\eta = \int_G \rho^*(\frac{1}{h}\alpha\otimes\alpha)d\eta + \int_G \rho^*(d\alpha(\cdot,J\cdot)) d\eta = \frac{1}{h}\alpha\otimes\alpha + \int_G \rho^*(d\alpha(\cdot,J\cdot)) d\eta,
\end{equation*}
where we have used that the first summand is $\rho$-invariant. 

We claim that $X$ is Beltrami with respect to $g$. Indeed, the contraction of $X$ with the second summand of the metric vanishes, since $X\in \ker d\alpha$, and therefore
\begin{equation*}
\iota_X g = \frac{1}{h}\alpha(hR)\alpha = \frac{h}{h}\alpha(R)\alpha = \alpha.
\end{equation*}
Taking the volume form $\mu = \frac{1}{h}\alpha\wedge (d\alpha)^{n}\neq 0$ we finally arrive at
\begin{equation*}
(d\iota_X g)^{n} = (d\alpha)^{n} = \iota_{X} \mu,
\end{equation*}
and therefore $\curl X = X$. Also notice that for the chosen $\mu$,
\begin{equation*}
    \mathcal{L}_X\mu = d\iota_X\mu = d(d\alpha)^{n} = 0,
\end{equation*}
so $X$ is also divergence-free and therefore $X$ is Beltrami with respect to $(g,\mu)$. This completes the proof of the theorem.
\end{proof}

An interesting consequence of the Reeb-Beltrami correspondence is that it allows us to view the $n$-body problem of celestial mechanics on positive energy levels as a Beltrami vector field on some Riemannian manifold. 

\subsection{Mechanical Hamiltonian systems as fluid flows and the $n$-body problem}\label{sec:nbodyfluid}

The $n$-body problem of celestial mechanics is the problem of determining the dynamics of a system of $n$ bodies in $\mathbb{R}^d$ moving according to classical laws of gravitation. This system can be expressed in the Hamiltonian formalism as follows. Take canonical coordinates $(q_1,\dots,q_n,p_1,\dots,p_n)$ on $T^*\mathbb{R}^{dn}$ with the canonical symplectic form $\omega = dp\wedge dq$, where $q_i$ represents the position of the $i$th body and $p_i$ its momentum. Denoting by $m_i$ the mass of the $i$th body, the Hamiltonian describing the system is
\begin{equation}
    H(q,p) = K(p) + U(q),
\end{equation}
with $K(p) = \sum_{1\leq i\leq n} \frac{|p_i|^2}{2m_i}$ the kinetic energy and $U(q) = -\sum_{1\leq i < j \leq n}\frac{m_im_j}{|q_i - q_j|}$ the potential energy. The Hamiltonian is a \textit{mechanical Hamiltonian} (i.e. consisting of a sum of kinetic and potential energies depending on momenta and positions respectively). 

\begin{lemma}[Lemma 2.6.3 and Remark 2.6.5 of \cite{FK18}]\label{lemma:mechanicalcontact}
    Let $M$ be a smooth manifold and $H = K(p) + U(q)$ a mechanical Hamiltonian on $T^*M$, where $K(p) = \frac{|p|^2_g}{2}$ for some Riemannian metric $g$ on $M$. Then for $c > \max U$, the Hamiltonian vector field $X_H$ restricted to the energy level $\Sigma = H^{-1}(c)$ is Reeb-like with respect to the canonical Liouville form $\alpha = pdq$ restricted to $\Sigma$.
\end{lemma}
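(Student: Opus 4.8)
The plan is to verify the two defining conditions of a Reeb-like field from Definition \ref{def:contact}, namely that $X_H$ spans $\ker d\alpha$ on $\Sigma$ and that $\alpha(X_H) > 0$ on $\Sigma$, after which writing $X_H = h R$ with $h = 1/\alpha(X_H) > 0$ completes the proof. First I would recall that on $T^*M$ the canonical Liouville form $\alpha = p\,dq$ satisfies $d\alpha = \omega$, the canonical symplectic form. Restricting to $\Sigma = H^{-1}(c)$, the key observation (already used in the discussion preceding Definition \ref{def:contact}) is that $\ker(\omega|_\Sigma) = \langle X_H\rangle$, since $\omega$ is nondegenerate and $\Sigma$ is a regular hypersurface. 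Thus as soon as $\alpha|_\Sigma$ is a contact form, its Reeb direction automatically coincides with $\langle X_H\rangle$, and the only genuine content is the positivity $\alpha(X_H) > 0$ on $\Sigma$, which simultaneously guarantees $\alpha|_\Sigma$ is contact (it forces $\alpha$ to be transverse to the characteristic direction $\ker d\alpha$).

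The main computation is therefore to evaluate $\alpha(X_H)$. I would compute this in canonical coordinates using the Hamiltonian structure. Since $\iota_{X_H}\omega = -dH$ and $\alpha = p\,dq$, a clean way is to use the Euler-type identity for the Liouville form: the Liouville vector field $Z = p\,\partial_p$ satisfies $\iota_Z \omega = -\alpha$ (up to sign conventions), so that $\alpha(X_H) = -\iota_Z\iota_{X_H}\omega = \iota_{X_H}\iota_Z\omega = dH(Z) = Z(H)$. Now $Z(H)$ is just the radial derivative of $H$ in the momentum variables, and because $K(p) = \tfrac{1}{2}|p|_g^2$ is homogeneous of degree $2$ in $p$ while $U(q)$ is independent of $p$, Euler's homogeneous function theorem gives $Z(H) = Z(K) = 2K(p)$. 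Hence $\alpha(X_H) = 2K(p)$ along the flow.

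It remains to see why this is strictly positive on $\Sigma$ precisely when $c > \max U$. On the energy level we have $K(p) = c - U(q)$, so $\alpha(X_H) = 2\bigl(c - U(q)\bigr)$. Since $c > \max U \geq U(q)$ for every $q$, this quantity is bounded below by $2(c - \max U) > 0$, so $\alpha(X_H)$ never vanishes on $\Sigma$. This establishes both that $\alpha|_\Sigma$ is a contact form (its contraction with the symplectically dual direction $X_H$ is nonzero, so $\alpha$ is transverse to $\ker d\alpha$, equivalently $\alpha\wedge(d\alpha)^n \neq 0$ on $\Sigma$) and that $X_H$ points along the Reeb direction with a strictly positive factor. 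Setting $h := 1/\alpha(X_H) = 1/(2(c-U(q)))$, we obtain $X_H = h R$ with $h > 0$, which is exactly the assertion that $X_H|_\Sigma$ is Reeb-like.

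I expect the main (though mild) obstacle to be bookkeeping of sign conventions in the relation between the Liouville vector field, the Liouville form, and $X_H$; the homogeneity argument itself is routine once the identity $\alpha(X_H) = Z(H)$ is correctly pinned down. The condition $c > \max U$ enters exactly to keep $K = c - U$ positive, which is the only place nondegeneracy could fail, so this is where I would be careful to note that if $c$ merely exceeds $U$ at some points but not its maximum, the argument breaks precisely at the points where $K$ vanishes.
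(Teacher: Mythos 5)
Your proposal is correct and follows essentially the same route as the paper: identify $\langle X_H\rangle = \ker d\alpha|_\Sigma$ via $d\alpha = \omega$, then show $\alpha(X_H) > 0$ on $\Sigma$, which yields both the contact condition and the Reeb-like property. The only difference is cosmetic: you justify $\alpha(X_H) = Z(H) = 2K(p)$ via the Liouville vector field and Euler's homogeneity theorem (and your value $2K(p) = 2(c-U) \geq 2(c - \max U) > 0$ is in fact the correct one --- the paper's proof asserts $\alpha(X_H) = K(p)$, a harmless factor-of-two slip), whereas the paper states the positivity directly.
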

\begin{proof}
    As in the discussion of Subsection \ref{sec:contact}, on one hand we have
    \begin{equation*}
        \omega|_\Sigma = d\alpha|_\Sigma \Rightarrow \langle X_H\rangle = \ker d\alpha|_\Sigma.
    \end{equation*}
    Furthermore, $\alpha$ is contact because $\alpha(X_H) = K(p) > 0$ on $\Sigma$, so 
    \begin{equation*}
        \ker d\alpha|_\Sigma \cap \ker \alpha|_\Sigma = 0,
    \end{equation*}
    which yields the contact condition.

\end{proof}

An immediate consequence of the above lemma and Theorem \ref{thm:equivariantcorrespondence} is the following reinterpretation of the $n$-body problem as a stationary incompressible fluid flow on some Riemannian manifold.

\begin{corollary}\label{cor:nbodyasfluid}
    The flow of the $n$-body problem on a positive energy level is a stationary Beltrami solution to the Euler equations on a hypersurface of $T^*\mathbb{R}^{dn}$ with some Riemannian metric. Any symmetries of the Hamiltonian field correspond to isometries of the corresponding metric.
\end{corollary}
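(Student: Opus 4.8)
The plan is to verify the hypotheses of Lemma \ref{lemma:mechanicalcontact} for the $n$-body Hamiltonian and then feed the resulting Reeb-like structure, together with its compact symmetries, into the converse direction of Theorem \ref{thm:equivariantcorrespondence}. First I would take the configuration manifold to be $M = \mathbb{R}^{dn}\setminus\Delta$, where $\Delta = \{q : q_i = q_j \text{ for some } i\neq j\}$ is the collision set, so that $U$ is smooth on $M$. The kinetic energy $K(p)=\sum_i |p_i|^2/(2m_i)$ equals $\tfrac12|p|_g^2$ for the flat mass metric $g$ on $M$ (the metric with block weights $m_i$, whose cometric gives $|p|_g^2 = \sum_i |p_i|^2/m_i$), so $H=K+U$ is a mechanical Hamiltonian in the precise sense required by Lemma \ref{lemma:mechanicalcontact}.

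Next I would check that every positive energy level is admissible. Because $U(q) = -\sum_{i<j} m_im_j/|q_i-q_j|$ is strictly negative on $M$ and tends to $0$ as the bodies separate, we have $\sup U = 0$; hence for any $c>0$ the hypothesis $c > \max U$ of Lemma \ref{lemma:mechanicalcontact} holds in the form $c > \sup U$, and on $\Sigma = H^{-1}(c)$ this gives $K(p) = c - U(q) > c > 0$. In particular $p\neq 0$ throughout $\Sigma$, so $dK\neq 0$ and therefore $dH\neq 0$ on $\Sigma$, making $c$ a regular value and $\Sigma$ a smooth hypersurface of $T^*M\subset T^*\mathbb{R}^{dn}$. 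Lemma \ref{lemma:mechanicalcontact} then applies and shows that $X_H|_\Sigma$ is Reeb-like for the restricted Liouville form $\alpha = p\,dq|_\Sigma$, the contact condition being exactly the positivity $\alpha(X_H)=K(p)>0$ just established.

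It remains to install the symmetries. The relevant compact symmetry group is $G = O(d)\times\Gamma$, where $O(d)$ acts diagonally by simultaneous orthogonal transformations of all position vectors and $\Gamma\subseteq S_n$ permutes the blocks of equal-mass bodies; both factors preserve the mass metric $g$ and the potential $U$, and hence the Hamiltonian $H$. Their cotangent lifts $\rho$ are symplectomorphisms of $T^*\mathbb{R}^{dn}$ that automatically preserve the tautological form $\alpha=p\,dq$, so they preserve $H$, $\Sigma$, $\alpha|_\Sigma$ and the Reeb-like field $X_H|_\Sigma$. Thus $(X_H|_\Sigma,\alpha|_\Sigma)$ is a $\rho$-invariant Reeb-like field for the compact group $G$. (The full symmetry group of the $n$-body problem also contains the non-compact subgroup of translations, which is precisely why the statement is phrased for genuine, i.e. compact, symmetries.)

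Finally I would invoke the converse direction of Theorem \ref{thm:equivariantcorrespondence}, which produces a $\rho$-invariant Riemannian metric on $\Sigma$ making $X_H|_\Sigma$ a nonsingular Beltrami field; by Proposition \ref{prop:beltramicontact} such a field solves the stationary Euler equations, and the $\rho$-invariance of the metric is exactly the assertion that the compact symmetries act by isometries. The analytic content is entirely carried by Lemma \ref{lemma:mechanicalcontact} and Theorem \ref{thm:equivariantcorrespondence}, so I expect the only delicate point to be the symmetry bookkeeping: isolating the compact part of the symmetry group and confirming that its cotangent lift fixes the Liouville form and preserves the energy level.
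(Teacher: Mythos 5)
Your proposal is correct and follows essentially the same route as the paper, which presents this corollary as an immediate consequence of Lemma \ref{lemma:mechanicalcontact} applied to the $n$-body Hamiltonian (positive energy forcing $K>0$ on the level set, since $U<0$) combined with the converse direction of Theorem \ref{thm:equivariantcorrespondence}. The details you add --- the mass metric realizing $K$ as $\tfrac12|p|_g^2$, the regular-value check, and the identification of a compact symmetry group whose cotangent lift preserves the Liouville form --- are precisely the bookkeeping the paper leaves implicit.
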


A natural question in view of this result is if the metric on an energy level which makes the Hamiltonian flow Beltrami can be characterized in any way. A natural guess for such a metric is the canonical lift of the  base metric to the cotangent bundle, restricted to the energy level. In the next section we show that this is the case when the Hamiltonian is  purely kinetic energy, but it is not true in general. In the next section we also present an explicit example of Corollary \ref{cor:nbodyasfluid}, namely we obtain an explicit formulation of the Kepler problem as an Euler flow.

%%%%%%%%%%%%%%%%%%%%%%%%%%%%%%%%%%%%%%%%%%%%%%%%%%%%
%		THE KEPLER PROBLEM AS A FLUID FLOW 		%
%%%%%%%%%%%%%%%%%%%%%%%%%%%%%%%%%%%%%%%%%%%%%%%%%%%%

\section{The Kepler-Euler Flow}\label{sec:kepler}
The $2$-body problem can be reduced to the Kepler problem, which is that of describing the dynamics of a system comprised of a large body, which we  call the \emph{star} and assume centered at the origin of the plane $\mathbb{R}^2$, and a body of comparatively negligible mass orbiting around the first, which we call the \emph{planet}. In particular, since the planet exerts negligible force on the star, the star remains stationary and therefore we need only to describe the motion of the planet. Taking canonical coordinates $(q,p) = (q_1,q_2,p_1,p_2)$ on $(T^*\mathbb{R}^2, \omega = p_1\wedge q_1 + p_2\wedge q_2)$, the Hamiltonian describing the system (ignoring masses and other constants) is
\begin{equation*}
H:T^*(\mathbb{R}^2\setminus\{0\}) \rightarrow \mathbb{R}, \quad (q,p) \mapsto \frac{|p|^2}{2} - \frac{1}{|q|}.
\end{equation*}
By Corollary \ref{cor:nbodyasfluid}, the Kepler problem on a positive energy level is a solution to the Euler equations for some Riemannian metric. Our goal is to find such a metric. We recall that a metric for which a Reeb-like vector field is Beltrami is called an \emph{adapted metric} to the Reeb-like field. Adapted metrics were introduced by Chern and Hamilton in \cite{CH85}, though, as mentioned in the introduction, it was Sullivan who first understood the connection with Beltrami fields in hydrodynamics.

\subsection*{Regularizing the problem}
When the planet is moving directly towards the star, a \textit{collision} occurs and the planet's velocity blows up in finite time. The flow is therefore not complete and it is advantageous to reparametrize time to fix this issue. Since we are mainly interested in the qualitative behaviour of trajectories, reparametrizing time is not a problem and we will consider two orbits to be identical if they are so up to reparametrization. This process is known as \emph{regularization} of the system and it is standard in dynamical systems. To regularize the Kepler problem we follow Arnold in \cite{AKN06}. We include a proof for the sake of completeness.

\begin{lemma}
Let $\gamma(t)$ be an integral curve of energy $c$ of a Hamiltonian $H$. If we reparametrize time by $\tau\mapsto t(\tau)$ where $\frac{dt}{d\tau} = G(x)$, then $\gamma(\tau) = \gamma(t(\tau))$ is an integral curve of energy 0 for the Hamiltonian $\bar{H} = G(H-c)$. If we take $G = (H + k)$ for a constant $k$, we can take $\bar{H} = \frac{1}{2}(H + k)^2$ and $\gamma(\tau)$ will have energy $\frac{(c+k)^2}{2}$ instead.
\end{lemma}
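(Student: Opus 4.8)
The plan is to reparametrize the flow and verify directly that the reparametrized curve solves Hamilton's equations for the claimed Hamiltonian on the claimed energy level. The key observation is that reparametrizing time by a positive factor only rescales the velocity vector along a trajectory, so it sends an integral curve of a vector field $X$ to an integral curve of $G\cdot X$ for the appropriate factor $G$; the content of the lemma is to identify that rescaled vector field as a Hamiltonian vector field again.

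First I would set up the chain rule. If $\gamma(t)$ is an integral curve of $X_H$ and we write $\gamma(\tau) = \gamma(t(\tau))$ with $\frac{dt}{d\tau} = G(\gamma(\tau))$, then
\begin{equation*}
\frac{d}{d\tau}\gamma(\tau) = \frac{dt}{d\tau}\,\frac{d}{dt}\gamma(t) = G\cdot X_H.
\end{equation*}
So the reparametrized curve is an integral curve of the vector field $G\,X_H$, and the whole lemma reduces to showing $G\,X_H = X_{\bar H}$ for the right choice of $\bar H$. Next I would compute $X_{\bar H}$ for $\bar H = G\,(H-c)$ using the derivation property of the Hamiltonian vector field assignment: since $\iota_{X_{\bar H}}\omega = -d\bar H$ and $d\bar H = (H-c)\,dG + G\,dH$, we get $X_{\bar H} = (H-c)\,X_G + G\,X_H$. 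The crucial point is the \emph{restriction to the energy level}: on the level set $\{H = c\}$ the factor $(H-c)$ vanishes, so the unwanted term $(H-c)\,X_G$ drops out and $X_{\bar H} = G\,X_H$ there. This matches the reparametrized field exactly, and since $\bar H = G(H-c)$ vanishes on $\{H=c\}$, the curve indeed has $\bar H$-energy $0$.

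For the second assertion I would specialize $G = H + k$. Then $\bar H = (H+k)(H-c)$, but I would instead check that the alternative choice $\bar H = \frac{1}{2}(H+k)^2$ produces a vector field that agrees with $G\,X_H$ on the level set $\{H=c\}$: here $d\bar H = (H+k)\,dH$, hence $X_{\bar H} = (H+k)\,X_H = G\,X_H$ identically, not merely on the level set. The value of $\frac{1}{2}(H+k)^2$ on $\{H=c\}$ is $\frac{(c+k)^2}{2}$, giving the stated energy. The advantage of this squared form is that $X_{\bar H}$ agrees with the reparametrized field everywhere, which is what one wants for regularization purposes (the new Hamiltonian governs the regularized dynamics globally, extending across collisions).

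The main obstacle is conceptual rather than computational: one must be careful to distinguish \emph{equality of vector fields on the whole phase space} from \emph{equality only on the energy level}, since the two choices of $\bar H$ differ precisely in this respect. The calculation itself is a short application of the Leibniz rule for $H \mapsto X_H$, so the proof is essentially immediate once the chain-rule setup and the restriction-to-$\{H=c\}$ step are in place. I would emphasize in the writeup that it is exactly the vanishing of $(H-c)$ on the level set that makes the reparametrized dynamics Hamiltonian, which is the whole reason energy levels are the natural objects here.
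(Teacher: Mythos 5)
Your proposal is correct and follows essentially the same route as the paper: chain rule to get $\frac{d\gamma}{d\tau} = G\,X_H$, then the Leibniz rule together with the vanishing of $(H-c)$ on the level set to identify this with $X_{\bar H}$, and the direct identity $(H+k)\,dH = d\bigl(\tfrac{1}{2}(H+k)^2\bigr)$ for the second claim. The paper phrases the key step dually, as an equality of $1$-forms $\omega(\dot\gamma,\cdot) = -G\,dH = -d\bar H$ along $\gamma$, rather than via your decomposition $X_{\bar H} = (H-c)X_G + G\,X_H$, but the content is identical.
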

\begin{proof}
We must check that $\frac{d\gamma}{d\tau}(\tau) = X_{\bar{H}}(\gamma(\tau))$. Applying the chain rule we have
\begin{equation*}
\frac{d\gamma}{d\tau}(\tau) = \frac{d\gamma}{dt}\frac{dt}{d\tau} = X_HG.
\end{equation*}
Now,
\begin{equation*}
\omega(\frac{d\gamma}{d\tau}(\tau),\cdot) = G\omega(X_H,\cdot) = -GdH = -(GdH + (H-c)dG) = -d(G(H-c)) = -d\bar{H},
\end{equation*}
where we have used that $(H-c) = 0$ along $\gamma$. From this we see that $\frac{d\gamma}{d\tau}(\tau) = X_{\bar{H}}(\gamma(\tau))$. If $G = (H + k)$, we have
\begin{equation*}
-GdH = -(H+k)dH = -d\left(\frac{1}{2}(H + k)^2 \right)= -d\bar{H},
\end{equation*}
as desired.
\end{proof}

With the help of this lemma we reparametrize time, slowing it down as the planet approaches the star taking $G = |q|$. On an energy level $c$, the regularized Kepler Hamiltonian becomes
\begin{equation*}
\bar{H}(q,p) = G(H-c) = |q|(\frac{|p|^2}{2} - c) - 1.
\end{equation*}

For reasons that will shortly become clear it is convenient to apply the lemma again with $G = (\bar{H} + 1)$ to obtain another equivalent Hamiltonian
\begin{equation*}
K_c(q,p) = \frac{|q|^2}{2}\Big(\frac{|p|^2 - 2c}{2}\Big)^2,
\end{equation*}
with integral curves on the energy level $K_c=\frac{1}{2}$ identical to integral curves of the original Hamiltonian on the $c$-energy level up to reparametrization. We henceforth refer to the flow of $K$ as the \emph{regularized Kepler flow} on the $c$-energy level, omitting $c$ when it does not lead to confusion. This Hamiltonian is particularly interesting because after a symplectic coordinate change called \emph{symplectic switch} given by $(x,y) \mapsto (q,p)=(y, -x)$, the Hamiltonian becomes
\begin{equation}\label{eq:regularizedKepler}
K(x,y) = \frac{|y|^2}{2}\Big(\frac{|x|^2 - 2c}{2}\Big)^2 = \frac{|y|^2_{g^*}}{2},
\end{equation}
where  $g^*$ is the inverse of the conformally flat metric $g = (\frac{2}{|x|^2 - 2c})^2\langle\cdot,\cdot\rangle_{euc}$. The physical interpretation is that $K$ consists only of kinetic energy. Such Hamiltonians are called \emph{kinetic} and they are of particular interest because their trajectories are lifted geodesics of the metric \cite{FK18}. Physically, since there is no potential, there is no external force acting on the system. It easy to check that $g$ is a constant $-2c$ curvature metric, and that the \emph{stereographic projections} shown in the following figure are the maps that induce these metrics on $\mathbb{R}^2$. When $c<0$ the variable $x$ takes values on the whole $\mathbb R^2$, when $c=0$ $x\in\mathbb R^2\backslash\{0\}$ and for $c>0$ the configuration space is $\{x\in\mathbb R^2: |x|^2>2c\}$.

\begin{center}

\tikzset{every picture/.style={line width=0.75pt}} %set default line width to 0.75pt        

\begin{tikzpicture}[x=0.75pt,y=0.75pt,yscale=-1,xscale=1]
%uncomment if require: \path (0,300); %set diagram left start at 0, and has height of 300

%Shape: Rectangle [id:dp7374225557317529] 
\draw   (145.75,110.33) -- (311.25,110.33) -- (260.5,178) -- (95,178) -- cycle ;
%Shape: Rectangle [id:dp821525655521272] 
\draw   (337.75,110.33) -- (503.25,110.33) -- (452.5,178) -- (287,178) -- cycle ;
%Shape: Rectangle [id:dp47120206609358006] 
\draw   (529.08,109) -- (694.58,109) -- (643.83,176.67) -- (478.33,176.67) -- cycle ;
%Flowchart: Connector [id:dp04753150236065151] 
\draw   (150.96,144.17) .. controls (150.96,117.75) and (174.31,96.33) .. (203.13,96.33) .. controls (231.94,96.33) and (255.29,117.75) .. (255.29,144.17) .. controls (255.29,170.58) and (231.94,192) .. (203.13,192) .. controls (174.31,192) and (150.96,170.58) .. (150.96,144.17) -- cycle ;
%Shape: Ellipse [id:dp7153121421420501] 
\draw  [dash pattern={on 4.5pt off 4.5pt}] (150.96,144.42) .. controls (150.96,134.61) and (174.31,126.67) .. (203.13,126.67) .. controls (231.94,126.67) and (255.29,134.61) .. (255.29,144.42) .. controls (255.29,154.22) and (231.94,162.17) .. (203.13,162.17) .. controls (174.31,162.17) and (150.96,154.22) .. (150.96,144.42) -- cycle ;
%Straight Lines [id:da7581519085341908] 
\draw    (136.6,149.6) -- (223.67,78) ;
%Straight Lines [id:da45566521503964985] 
\draw  [dash pattern={on 4.5pt off 4.5pt}]  (85.4,187.6) -- (136.6,149.6) ;
%Shape: Circle [id:dp7445846837333507] 
\draw  [fill={rgb, 255:red, 255; green, 0; blue, 0 }  ,fill opacity=1 ] (134.77,149.6) .. controls (134.77,148.59) and (135.59,147.77) .. (136.6,147.77) .. controls (137.61,147.77) and (138.43,148.59) .. (138.43,149.6) .. controls (138.43,150.61) and (137.61,151.43) .. (136.6,151.43) .. controls (135.59,151.43) and (134.77,150.61) .. (134.77,149.6) -- cycle ;
%Shape: Circle [id:dp8839674375837485] 
\draw  [fill={rgb, 255:red, 0; green, 0; blue, 0 }  ,fill opacity=1 ] (200.29,96.33) .. controls (200.29,95.32) and (201.11,94.5) .. (202.13,94.5) .. controls (203.14,94.5) and (203.96,95.32) .. (203.96,96.33) .. controls (203.96,97.35) and (203.14,98.17) .. (202.13,98.17) .. controls (201.11,98.17) and (200.29,97.35) .. (200.29,96.33) -- cycle ;
%Shape: Circle [id:dp9463773032496989] 
\draw  [fill={rgb, 255:red, 64; green, 54; blue, 255 }  ,fill opacity=1 ] (161.77,127.6) .. controls (161.77,126.59) and (162.59,125.77) .. (163.6,125.77) .. controls (164.61,125.77) and (165.43,126.59) .. (165.43,127.6) .. controls (165.43,128.61) and (164.61,129.43) .. (163.6,129.43) .. controls (162.59,129.43) and (161.77,128.61) .. (161.77,127.6) -- cycle ;
\draw   (392.47,141.14) -- (397.03,145.77)(396.8,141.44) -- (392.7,145.48) ;
%Curve Lines [id:da729288203037582] 
\draw    (517.67,57) .. controls (549,101) and (613,107) .. (653.67,58.33) ;
%Shape: Ellipse [id:dp08404931070772959] 
\draw  [dash pattern={on 4.5pt off 4.5pt}] (534.29,142.83) .. controls (534.29,133.03) and (557.65,125.08) .. (586.46,125.08) .. controls (615.27,125.08) and (638.63,133.03) .. (638.63,142.83) .. controls (638.63,152.64) and (615.27,160.58) .. (586.46,160.58) .. controls (557.65,160.58) and (534.29,152.64) .. (534.29,142.83) -- cycle ;
%Shape: Arc [id:dp37401808786271484] 
\draw  [draw opacity=0][dash pattern={on 0.84pt off 2.51pt}] (532.95,71.34) .. controls (532.95,71.34) and (532.95,71.34) .. (532.95,71.34) .. controls (532.95,71.34) and (532.95,71.34) .. (532.95,71.34) .. controls (532.98,65.63) and (556.29,61.1) .. (585.03,61.23) .. controls (613.76,61.35) and (637.03,66.08) .. (637.01,71.8) .. controls (636.98,77.51) and (613.67,82.04) .. (584.94,81.91) .. controls (557.09,81.79) and (534.37,77.34) .. (533.02,71.87) -- (584.98,71.57) -- cycle ; \draw  [dash pattern={on 0.84pt off 2.51pt}] (532.95,71.34) .. controls (532.95,71.34) and (532.95,71.34) .. (532.95,71.34) .. controls (532.95,71.34) and (532.95,71.34) .. (532.95,71.34) .. controls (532.98,65.63) and (556.29,61.1) .. (585.03,61.23) .. controls (613.76,61.35) and (637.03,66.08) .. (637.01,71.8) .. controls (636.98,77.51) and (613.67,82.04) .. (584.94,81.91) .. controls (557.09,81.79) and (534.37,77.34) .. (533.02,71.87) ;  
%Straight Lines [id:da38917709659162947] 
\draw    (517.93,148.27) -- (622.93,64.27) ;
%Straight Lines [id:da21845845099129657] 
\draw  [dash pattern={on 4.5pt off 4.5pt}]  (466.73,186.27) -- (517.93,148.27) ;
%Shape: Circle [id:dp04916838969005388] 
\draw  [fill={rgb, 255:red, 255; green, 0; blue, 0 }  ,fill opacity=1 ] (516.1,148.27) .. controls (516.1,147.25) and (516.92,146.43) .. (517.93,146.43) .. controls (518.95,146.43) and (519.77,147.25) .. (519.77,148.27) .. controls (519.77,149.28) and (518.95,150.1) .. (517.93,150.1) .. controls (516.92,150.1) and (516.1,149.28) .. (516.1,148.27) -- cycle ;
%Shape: Circle [id:dp27250152766667246] 
\draw  [fill={rgb, 255:red, 0; green, 0; blue, 0 }  ,fill opacity=1 ] (584.63,93) .. controls (584.63,91.99) and (585.45,91.17) .. (586.46,91.17) .. controls (587.47,91.17) and (588.29,91.99) .. (588.29,93) .. controls (588.29,94.01) and (587.47,94.83) .. (586.46,94.83) .. controls (585.45,94.83) and (584.63,94.01) .. (584.63,93) -- cycle ;
%Shape: Circle [id:dp9083125152754667] 
\draw  [fill={rgb, 255:red, 64; green, 54; blue, 255 }  ,fill opacity=1 ] (621.1,64.27) .. controls (621.1,63.25) and (621.92,62.43) .. (622.93,62.43) .. controls (623.95,62.43) and (624.77,63.25) .. (624.77,64.27) .. controls (624.77,65.28) and (623.95,66.1) .. (622.93,66.1) .. controls (621.92,66.1) and (621.1,65.28) .. (621.1,64.27) -- cycle ;
%Straight Lines [id:da7149599186411366] 
\draw  [dash pattern={on 4.5pt off 4.5pt}]  (622.93,64.1) -- (652.33,37.67) ;
%Shape: Ellipse [id:dp4918501394784065] 
\draw  [dash pattern={on 4.5pt off 4.5pt}] (342.96,144.17) .. controls (342.96,134.36) and (366.31,126.42) .. (395.13,126.42) .. controls (423.94,126.42) and (447.29,134.36) .. (447.29,144.17) .. controls (447.29,153.97) and (423.94,161.92) .. (395.13,161.92) .. controls (366.31,161.92) and (342.96,153.97) .. (342.96,144.17) -- cycle ;
%Straight Lines [id:da4831354725044834] 
\draw    (411.67,138.33) -- (461.67,122.33) ;
%Shape: Circle [id:dp13473587311019375] 
\draw  [fill={rgb, 255:red, 255; green, 0; blue, 0 }  ,fill opacity=1 ] (460.77,122.6) .. controls (460.77,121.59) and (461.59,120.77) .. (462.6,120.77) .. controls (463.61,120.77) and (464.43,121.59) .. (464.43,122.6) .. controls (464.43,123.61) and (463.61,124.43) .. (462.6,124.43) .. controls (461.59,124.43) and (460.77,123.61) .. (460.77,122.6) -- cycle ;
%Shape: Circle [id:dp9306123090810325] 
\draw  [fill={rgb, 255:red, 64; green, 54; blue, 255 }  ,fill opacity=1 ] (410.1,138.27) .. controls (410.1,137.25) and (410.92,136.43) .. (411.93,136.43) .. controls (412.95,136.43) and (413.77,137.25) .. (413.77,138.27) .. controls (413.77,139.28) and (412.95,140.1) .. (411.93,140.1) .. controls (410.92,140.1) and (410.1,139.28) .. (410.1,138.27) -- cycle ;

% Text Node
\draw (188,199.67) node [anchor=north west][inner sep=0.75pt]   [align=left] {$\displaystyle \mathbb{R}^{3}$};
% Text Node
\draw (375.33,199) node [anchor=north west][inner sep=0.75pt]   [align=left] {$\displaystyle \mathbb{R}^{2}$};
% Text Node
\draw (570.67,198.67) node [anchor=north west][inner sep=0.75pt]   [align=left] {$\displaystyle \mathbb{M}^{3}$};

\end{tikzpicture}
\captionof{figure}{The maps that induce the metrics whose lifted geodesic flows are the regularized Kepler flows on the respective energy levels. The sphere corresponds to negative energy levels and has radius $\frac{1}{\sqrt{2|c|}}$. The plane corresponds to the null energy level and the map is given by $x\mapsto \frac{2x}{|x|^2}$. The hyperboloid in Minkowski space $\mathbb{M}^3$ with signature $++-$ corresponds to positive energy levels and is defined by the equation $x^2 + y^2 - z^2 = -2c$.}
\end{center}\label{fig:metricmaps}

The energy levels $K=\frac{1}{2}$ are more precisely the \textit{spherical cotangent bundles} of these surfaces. We recall the spherical cotangent bundle of a Riemannian manifold $(M,g)$ is defined as
\begin{equation*}
 S^*M = \{\alpha \in T^*M \;|\; |\alpha|_{g^*} = 1\} \subseteq T^*M.
 \end{equation*}
 Putting these last remarks together, we reach a formulation of the well-known \emph{Moser-Osipov-Belbruno regularization}. It was Moser who first gave a regularization of this sort for negative energy levels in \cite{M70}. Osipov and Belbruno gave the corresponding positive energy regularization in \cite{O77} and \cite{B77} respectively. The regularizations can be condensed into one theorem covering all energy levels.

\begin{theorem}[Moser-Osipov-Belbruno]\label{thm:Moser}
The dynamics of the Kepler problem on the energy level $H=c$ are equivalent to the lifted geodesic flow on
\begin{itemize}
\item $S^*\mathbb{S}^2_c$ for $c < 0$,
\item $S^*\mathbb{R}^2$ for $c = 0$ and
\item $S^*\mathbb{H}^2_c$ for $c > 0 $
\end{itemize}
after applying the stereographic maps described in Figure \ref{fig:metricmaps}. 
\end{theorem}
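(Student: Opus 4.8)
The plan is to build on the reductions already carried out. The two successive time reparametrizations together with the symplectic switch $(x,y)\mapsto(q,p)=(y,-x)$ have reduced the Kepler dynamics on the level $H=c$, up to reparametrization, to the flow of the kinetic Hamiltonian $K(x,y)=\tfrac12|y|^2_{g^*}$ restricted to its level $K=\tfrac12$. Since the integral curves agree as unparametrized curves, it suffices to analyze the flow of $K$, and the whole argument may be read up to reparametrization; in particular the precise normalization (scale) of the metrics below is immaterial, as a constant rescaling of a metric only rescales time along its geodesics.

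The first step is to recognize $\{K=\tfrac12\}$ as the spherical cotangent bundle $S^*\mathbb{R}^2$ of $(\mathbb{R}^2,g)$, since $K=\tfrac12$ is exactly the condition $|y|_{g^*}=1$. As $K$ is purely kinetic, its Hamiltonian flow is the cogeodesic flow of $g$, and its restriction to $S^*\mathbb{R}^2$ is precisely the canonical lift to the spherical cotangent bundle of the unit-speed geodesic flow of $g$ (see \cite{FK18}). Thus the regularized Kepler flow on the $c$-level is the lifted geodesic flow of $(\mathbb{R}^2,g)$, the configuration space being $\mathbb{R}^2$, $\mathbb{R}^2\setminus\{0\}$ or $\{|x|^2>2c\}$ according to the sign of $c$.

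The core of the proof is then to identify $(\mathbb{R}^2,g)$ isometrically with the stated constant-curvature model by means of the maps of Figure \ref{fig:metricmaps}. Here I would verify by a direct conformal-factor computation that each map pulls the natural constant-curvature metric back to $g=\tfrac{4}{(|x|^2-2c)^2}\langle\cdot,\cdot\rangle_{euc}$, a metric whose Gaussian curvature one checks to be $-2c$: for $c<0$, stereographic projection of the sphere $\mathbb{S}^2_c$ of radius $1/\sqrt{-2c}$ (curvature $-2c>0$); for $c=0$, the inversion $x\mapsto 2x/|x|^2$, whose conformal factor is exactly $4/|x|^4$, so that it is an isometry onto flat $\mathbb{R}^2$ carrying $g$-geodesics to straight lines; and for $c>0$, the analogous projection of the hyperboloid $\{x^2+y^2-z^2=-2c\}$ in Minkowski space $\mathbb{M}^3$. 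Because an isometry (even up to a constant scale) sends unit-speed geodesics to geodesics and lifts canonically to a diffeomorphism of spherical cotangent bundles conjugating the two lifted geodesic flows, assembling the three steps proves the theorem.

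The main obstacle --- indeed essentially the only nontrivial point --- is this metric identification, and the effort it requires is bookkeeping rather than insight: one must match the conformal factors (equivalently, pin the curvatures to $-2c$) and, more delicately, track the domains and the removed or ideal points. Concretely, one checks that the collision locus and the point at infinity of the configuration space correspond to the omitted point of the model (the north pole when $c<0$) or to the circle $\{|x|^2=2c\}$ lying at infinite $g$-distance (when $c>0$), so that the reparametrized --- hence complete --- flow fills out the entire spherical cotangent bundle. For $c<0$ this is precisely the content of the regularization: the flow extends smoothly across the collision set and closes up on the compact manifold $S^*\mathbb{S}^2_c$.
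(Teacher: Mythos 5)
Your proposal is correct and follows essentially the same route as the paper, which likewise assembles the theorem from the two regularizing reparametrizations, the symplectic switch producing the kinetic Hamiltonian $K(x,y)=\tfrac{1}{2}|y|^2_{g^*}$, the recognition of the level $\{K=\tfrac{1}{2}\}$ as the spherical cotangent bundle, and the identification of $g=\tfrac{4}{(|x|^2-2c)^2}\langle\cdot,\cdot\rangle_{euc}$ with the constant $(-2c)$-curvature models via the stereographic maps of Figure \ref{fig:metricmaps}. The conformal-factor and domain/completeness bookkeeping you spell out is precisely what the paper leaves as ``easy to check,'' deferring a full elementary geometric proof to Geiges \cite{G16}.
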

For an elementary geometric proof of this result, we recommend Geiges' exposition \cite{G16}.

\subsection*{The contact form and adapted metric}
We have already seen in Lemma \ref{lemma:mechanicalcontact} that the contact form for which the regularized Kepler flow is Reeb-like on an energy level is the Liouville form $\alpha = ydx$ restricted to the energy level.  If we want the contact form on the original phase space, we simply pull back the Liouville form by the maps that induce the metrics and then by the symplectic switch. However, we continue the exposition in the context of constant curvature surfaces, as this results in more elegant and concise formulations of the statements.

The following proposition is well known and gives an equivalent characterization of adapted metrics to Reeb vector fields. We will use it to obtain the adapted metric to the Kepler Reeb vector field explicitly.

\begin{proposition}\label{prop:adaptedmetric}
Let $(\alpha,R)$ be a Reeb field. A Riemannian metric $g$ such that $R$ is $g$-orthogonal to $\ker\alpha$ and $|R|^2_g$ is constant is an adapted metric to $(\alpha,R)$.
\end{proposition}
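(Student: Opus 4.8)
The plan is to show that the two stated hypotheses together force $\iota_R g$ to be a constant multiple of the contact form $\alpha$, and then to run the curl computation exactly as in the proof of Theorem \ref{thm:equivariantcorrespondence}, choosing the natural contact volume form so that $R$ comes out Beltrami.

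First I would translate the geometric hypotheses into a single algebraic identity. Since $R$ is $g$-orthogonal to $\ker\alpha$, the $1$-form $\iota_R g = g(R,\cdot)$ annihilates the hyperplane field $\ker\alpha$; as $\alpha$ is a defining form for this hyperplane, this forces $\iota_R g = \lambda\,\alpha$ for some function $\lambda$. Contracting with $R$ and using $\alpha(R)=1$ gives $\lambda = (\iota_R g)(R) = g(R,R) = |R|^2_g$, which is constant by the second hypothesis. Hence $\iota_R g = \lambda\,\alpha$ with $\lambda$ a positive constant.

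Next I would compute the curl of $R$ with respect to the contact volume form $\mu = \alpha\wedge(d\alpha)^n$. Because $\lambda$ is constant, $d\iota_R g = \lambda\, d\alpha$, so $(d\iota_R g)^n = \lambda^n (d\alpha)^n$. On the other hand, using $\iota_R\alpha = 1$ and $\iota_R d\alpha = 0$ one finds $\iota_R\mu = (d\alpha)^n$, so that $(d\iota_R g)^n = \lambda^n\,\iota_R\mu = \iota_{\lambda^n R}\,\mu$; since $\mu$ is nondegenerate, the definition of the curl yields $\curl R = \lambda^n R$, i.e.\ $R$ is proportional to its own curl with the constant factor $\lambda^n$. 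To finish I would verify that $R$ is divergence-free for $\mu$: as $(d\alpha)^{n+1}=0$ in dimension $2n+1$ we have $d\mu = (d\alpha)^{n+1}=0$ and $d\iota_R\mu = d(d\alpha)^n = 0$, so $\mathcal{L}_R\mu = d\iota_R\mu + \iota_R d\mu = 0$. Thus $R$ is a nonsingular Beltrami field for $(g,\mu)$ and $g$ is adapted.

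The only genuinely delicate point is the role of the volume form: if one insists on the Riemannian volume $\mu_g$, the proportionality factor becomes a nonconstant (though nonvanishing) function and divergence-freeness is no longer automatic. I would sidestep this by working with the contact volume $\mu = \alpha\wedge(d\alpha)^n$ above, which is the volume form implicitly used throughout Section \ref{sec:equivariant}; and if a statement relative to $\mu_g$ is desired, I would invoke the rescaling remark following Proposition \ref{prop:beltramicontact} to pass to the volume form $f\mu_g$ for which $R$ is Beltrami with unit factor.
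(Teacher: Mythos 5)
Your proposal is correct and follows essentially the same route as the paper's proof: orthogonality gives $\iota_R g = \lambda\alpha$, constancy of $|R|^2_g$ makes $\lambda$ constant, and the curl computation with the contact volume $\mu = \alpha\wedge(d\alpha)^n$ yields $\curl R = \lambda^n R$ (the paper simply normalizes $\lambda = 1$, getting $\curl R = R$). Your explicit verification of $\mathcal{L}_R\mu = 0$ and your closing remark on the choice of volume form only spell out details the paper leaves implicit.
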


\begin{proof}
Since $\ker \iota_R g = \ker \alpha$, necessarily $\iota_R g = h\alpha$ for some $h\neq 0$. Furthermore, $|R|^2_g = h\alpha(R) = h$ is constant by hypothesis, say $h=1$. Taking $\mu = \alpha \wedge (d\alpha)^{n}$, which is a volume form preserved by $R$, we obtain
\begin{equation*}
\iota_{R}\mu = (d\alpha)^{n} = (d\iota_R g)^{n},
\end{equation*}
so that $\curl R = R$, as we wanted to prove. 
\end{proof}

\begin{theorem}\label{cotangent lift}
Let $(M,g)$ be a Riemannian manifold. The Reeb field on $S^*M$ with respect to the canonical Liouville form $\alpha$ is orthogonal to $\ker\alpha$ and of constant magnitude with respect to the canonical lift $S^*g$ of $g$ to the spherical cotangent bundle.
\end{theorem}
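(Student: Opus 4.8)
The plan is to isolate a single clean identity, namely that the $S^*g$-dual of the Reeb field $R$ is exactly the contact form, $\iota_R(S^*g)=\alpha$ on $S^*M$. Both claims — orthogonality to $\ker\alpha$ and constancy of $|R|^2_{S^*g}$ — then fall out formally, and via Proposition \ref{prop:adaptedmetric} this simultaneously shows $S^*g$ is adapted. First I would fix the geometric setup. The Levi-Civita connection of $g$ induces a horizontal–vertical splitting $T(T^*M)=\mathcal H\oplus\mathcal V$, where $\mathcal V_{(q,p)}=\ker d\pi\cong T^*_qM$ (with $\pi:T^*M\to M$ the projection) and $\mathcal H_{(q,p)}\cong T_qM$ via horizontal lift. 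The canonical lift (Sasaki) metric $S^*g$ is characterised by declaring $\mathcal H\perp\mathcal V$ and setting $S^*g$ equal to $g$ on horizontal lifts and to $g^*$ on vertical vectors. In a canonical chart $(q^i,p_i)$, an adapted frame is $\delta_i=\partial_{q^i}+\Gamma^k_{ij}p_k\,\partial_{p_j}$ (horizontal) and $\partial^i=\partial_{p_i}$ (vertical), with $S^*g(\delta_i,\delta_j)=g_{ij}$, $S^*g(\partial^i,\partial^j)=g^{ij}$, $S^*g(\delta_i,\partial^j)=0$; on $S^*M=\{\,|p|_{g^*}=1\,\}$ I take the induced metric, still written $S^*g$.

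Next I would identify the Reeb field. Taking $H=\frac12|p|^2_{g^*}$, the unit cotangent bundle is the level $H=\frac12$, which is the purely kinetic case of Lemma \ref{lemma:mechanicalcontact}; since $\alpha(X_H)=p_i\,\partial H/\partial p_i=|p|^2_{g^*}=2H=1$ there, the Reeb field of $\alpha|_{S^*M}$ is simply $R=X_H|_{S^*M}$. The heart of the argument is then to show that the geodesic spray $X_H$ is \emph{purely horizontal} — that it equals the horizontal lift of the raised momentum $p^\sharp=g^{ij}p_j\,\partial_{q^i}$. Writing $X_H=g^{il}p_l\,\partial_{q^i}-\frac12(\partial_i g^{kl})p_kp_l\,\partial_{p_i}$ and re-expressing it in the frame $\{\delta_i,\partial^i\}$, the vertical component is $-\frac12(\partial_j g^{kl})p_kp_l-g^{il}p_l\,\Gamma^k_{ij}p_k$, which vanishes identically upon substituting the metric-compatibility identity $\partial_j g^{kl}=-g^{km}\Gamma^l_{mj}-g^{lm}\Gamma^k_{mj}$ and using the symmetry of $p_kp_l$. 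This cancellation is the one genuinely nontrivial computation, and I expect it to be the main (indeed the only) obstacle; conceptually it is just the statement that the geodesic flow is parallel transport of the velocity along itself, so the spray is horizontal.

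Finally I would contract. Since $R=p^i\delta_i$ with $p^i=g^{ij}p_j$, for any $W=a^j\delta_j+b_j\partial^j$ one computes $S^*g(R,W)=p^i a^j g_{ij}=a^j p_j$, whereas $\alpha(W)=p_j a^j$ because $dq^i(\delta_j)=\delta^i_j$ and $dq^i(\partial^j)=0$. Hence $\iota_R(S^*g)=\alpha$ on $T^*M$, and restricting to vectors tangent to $S^*M$ (noting $R\in TS^*M$) gives the same identity there. Consequently $\ker\alpha=\ker\iota_R(S^*g)$, so $R$ is $S^*g$-orthogonal to $\ker\alpha$, and $|R|^2_{S^*g}=(\iota_R S^*g)(R)=\alpha(R)=1$ is constant on $S^*M$, which is exactly the two assertions of the theorem. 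By Proposition \ref{prop:adaptedmetric} these together establish, in addition, that $S^*g$ is an adapted metric to the (Kepler) Reeb field, as desired.
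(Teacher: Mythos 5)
Your proposal is correct and follows essentially the same route as the paper: both use the Levi-Civita horizontal--vertical splitting of $T(T^*M)$, the key fact that the geodesic spray is horizontal with respect to this splitting, and then a contraction with the Mok/Sasaki lift metric. The only differences are organizational rather than substantive: you prove the horizontality of the spray explicitly via the metric-compatibility identity (the paper asserts it as a known property of geodesic vector fields), and you package the two conclusions into the single identity $\iota_R(S^*g)=\alpha$, whereas the paper verifies orthogonality to $\ker\alpha$ and constancy of $|R|^2_{S^*g}$ in two separate coordinate computations.
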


Before the proof of this theorem, we recall how to construct $S^*g$. We begin by emphasizing that $g$ is a tensor field that takes vectors of $TM$, while $S^*g$ is a tensor field that takes vectors of $T(S^*M)$. We construct $S^*g$ by first constructing the cotangent lift $T^*g$ on $T^*M$ and then restricting it to $S^*M$. The cotangent lift was introduced by Mok in \cite{M77} following Sasaki's construction of the tangent lift metric in \cite{S58}. 

The tangent bundle of the cotangent bundle $T^*M \stackrel{\pi}{\rightarrow}M$ splits on each $\xi\in T^*M$ into the \emph{vertical tangent space} $V_\xi = \ker T_{\xi}\pi$ and a \emph{horizontal tangent space} $H_\xi$, which without additional structure cannot be chosen canonically. It is only required that $T_\xi(T^*M) = V_\xi \oplus H_\xi$. In fact, a smooth choice of horizontal tangent spaces is equivalent to a choice of a connection on $M$. Precisely for this reason, the metric $g$ provides a canonical choice of $H$ through the Levi-Civita connection,
\begin{equation*}
H_\xi = \{(\dot{\gamma}, \dot{\theta}) \in T_\xi(T^*M) \; | \; (\gamma(t),\theta(t)) \subseteq T^*M \; \text{and } \nabla_{\dot{\gamma}}\theta = 0\}.
\end{equation*}
It is easy to check that indeed $T_\xi(T^*M) = V_\xi \oplus H_\xi$ and that $H_\xi$ is naturally isomorphic to $T_{\pi(\xi)}M$, since $H_\xi$ is identified with the space of geodesics on $M$ passing through $\pi(\xi)$. On the other hand, $V_\xi \cong T^*_{\pi(\xi)}M$, because if we take paths of the form $(\pi(\xi),t\theta)$, their tangent fields are in $V_\xi$ and they are identified with $\theta\in T^*_{\pi(\xi)}M$. The following figure illustrates this splitting.

\begin{center}

\tikzset{every picture/.style={line width=0.75pt}} %set default line width to 0.75pt        

\begin{tikzpicture}[x=1pt,y=1pt,yscale=-1,xscale=1]
%uncomment if require: \path (0,300); %set diagram left start at 0, and has height of 300

%Curve Lines [id:da8801635376360635] 
\draw    (134,94) .. controls (202,107) and (238,172) .. (230.5,197) ;
%Curve Lines [id:da25947811130076004] 
\draw    (331,57) .. controls (374,76) and (401,162.5) .. (401,192.5) ;
%Curve Lines [id:da4027481633457415] 
\draw    (134,94) .. controls (229,99) and (318,90) .. (331,57) ;
%Curve Lines [id:da05980178721760332] 
\draw    (230.5,197) .. controls (275.5,189) and (357,188.5) .. (401,192.5) ;
%Shape: Parallelogram [id:dp23542124822172772] 
\draw   (227.87,108.84) -- (318.67,94.23) -- (362.47,155.39) -- (271.67,170) -- cycle ;
%Straight Lines [id:da3677713207665101] 
\draw    (279.67,132.87) -- (305.09,113.69) ;
\draw [shift={(307.48,111.89)}, rotate = 142.97] [fill={rgb, 255:red, 0; green, 0; blue, 0 }  ][line width=0.08]  [draw opacity=0] (8.04,-3.86) -- (0,0) -- (8.04,3.86) -- (5.34,0) -- cycle    ;
%Straight Lines [id:da9989449741644907] 
\draw [color={rgb, 255:red, 255; green, 0; blue, 0 }  ,draw opacity=1 ]   (279.67,132.87) -- (303.06,132.07) ;
\draw [shift={(306.06,131.97)}, rotate = 178.05] [fill={rgb, 255:red, 255; green, 0; blue, 0 }  ,fill opacity=1 ][line width=0.08]  [draw opacity=0] (5.36,-2.57) -- (0,0) -- (5.36,2.57) -- (3.56,0) -- cycle    ;
%Straight Lines [id:da16869982026963348] 
\draw [color={rgb, 255:red, 37; green, 0; blue, 250 }  ,draw opacity=1 ]   (307.48,111.89) -- (287.68,107.19) ;
\draw [shift={(284.76,106.5)}, rotate = 13.34] [fill={rgb, 255:red, 37; green, 0; blue, 250 }  ,fill opacity=1 ][line width=0.08]  [draw opacity=0] (5.36,-2.57) -- (0,0) -- (5.36,2.57) -- (3.56,0) -- cycle    ;
%Straight Lines [id:da20428221125141088] 
\draw [color={rgb, 255:red, 37; green, 0; blue, 250 }  ,draw opacity=1 ]   (322.5,178) -- (316.15,176.28) ;
\draw [shift={(313.26,175.5)}, rotate = 15.13] [fill={rgb, 255:red, 37; green, 0; blue, 250 }  ,fill opacity=1 ][line width=0.08]  [draw opacity=0] (5.36,-2.57) -- (0,0) -- (5.36,2.57) -- (3.56,0) -- cycle    ;
%Straight Lines [id:da4243476978267364] 
\draw [color={rgb, 255:red, 255; green, 0; blue, 0 }  ,draw opacity=1 ]   (331.67,176.87) -- (338.53,175.91) ;
\draw [shift={(341.5,175.5)}, rotate = 172.07] [fill={rgb, 255:red, 255; green, 0; blue, 0 }  ,fill opacity=1 ][line width=0.08]  [draw opacity=0] (5.36,-2.57) -- (0,0) -- (5.36,2.57) -- (3.56,0) -- cycle    ;

% Text Node
\draw (207.75,99) node [anchor=north west][inner sep=0.75pt]  [font=\tiny] [align=left] {$\displaystyle T_{\pi ( \xi )} M$};
% Text Node
\draw (284.15,115.46) node [anchor=north west][inner sep=0.75pt]  [font=\tiny] [align=left] {$\displaystyle \xi $};
% Text Node
\draw (284.88,137.24) node [anchor=north west][inner sep=0.75pt]  [font=\tiny] [align=left] {$\displaystyle v \in H_{\xi }$};
% Text Node
\draw (292.38,99.24) node [anchor=north west][inner sep=0.75pt]  [font=\tiny] [align=left] {$\displaystyle w \in V_{\xi }$};
% Text Node
\draw (293.88,171.24) node [anchor=north west][inner sep=0.75pt]  [font=\tiny] [align=left] {$\displaystyle W =\ \ \ +\ \ \ \ \in T_{\xi }\left( T^{*} M\right) \ $};

\end{tikzpicture}\label{HorizontalAndVertical}
\captionof{figure}{Splitting of $W = (v,w) \in T_\xi(T^*M)$ in components $v$ of a horizontal tangent space and $w$ of the vertical tangent space. The horizontal component $v$ can be thought of as moving the base point $\pi(\xi)$ while the vertical component $w$ moves the covector $\xi$ within the cotangent space at $\pi(\xi)$.}
\end{center}
With these natural identifications we finally obtain
\begin{equation*}
 T_\xi(T^*M) \cong T_{\pi(\xi)}^*M \oplus T_{\pi(\xi)}M,
\end{equation*}
and since there are natural inner products on each component, given by $g^*$ and $g$ respectively, we can define an inner product on $T_\xi(T^*M)$ by imposing that the components are orthogonal to each other.

A coordinate expression for the resulting metric is the following. If $V = (\dot{\alpha}, \dot{\theta})$ and $W = (\dot{\beta}, \dot{\omega})$,
\begin{equation*}
T^*g(V,W) = g(T\pi(V),T\pi(W)) + g^*(\nabla_{\dot{\alpha}} \theta, \nabla_{\dot{\beta}} \omega).
\end{equation*}
The first component projects onto $H_\xi \cong T_{\pi(\xi)}M$ and the second onto $V_\xi \cong T^*_{\pi(\xi)}M$.

\begin{proof}[Proof of Theorem \ref{cotangent lift}]
We first check that $R$ is orthogonal to $\ker\alpha$ with respect to $S^*g$, which, again, is $T^*g$ restricted to $T(S^*M)$. Taking natural coordinates $(x,y)$, we have
\begin{equation*}
K(x,y) = \frac{1}{2}g^{ij}(x)y_iy_j \Rightarrow dK(x,y) = \frac{1}{2}g^{ij}_{,k}y_iy_jdx^k + g^{ij}y_idy_j,
\end{equation*}
where we are using Einstein notation to avoid notational clutter. In this case, the Reeb vector field is the Hamiltonian vector field, so it has the following expression,
\begin{equation*}
R = X_K = \frac{1}{2}g^{ij}(x)_{,k}y_iy_j\frac{\partial}{\partial y_k} - g^{ij}y_i\frac{\partial}{\partial x^j}.
\end{equation*}
On the other hand, a vector field $Y\in \ker \alpha$ is necessarily of the form $Y = \alpha^{i}\frac{\partial}{\partial x^{i}} + \beta_j\frac{\partial}{\partial y_j}$ with
\begin{equation*}
\alpha(Y) = y_kdx^k(\alpha^{i}\frac{\partial}{\partial x^{i}} + \beta_j\frac{\partial}{\partial y_j}) = y_i\alpha^{i} = 0.
\end{equation*}
Thus, since the projection of $R$ onto the horizontal tangent spaces vanishes for being a geodesic vector field, the inner product of $R$ and $Y$ by $S^*g$ is
\begin{equation*}
S^*g(R,Y) = g(T\pi(R),T\pi(Y)) = g(- g^{ij}y_i\frac{\partial}{\partial x^j}, \alpha^{i}\frac{\partial}{\partial x^{i}}) = -g_{kl}g^{ik}y_i\alpha^l = -y_i\alpha^{i} = 0.
\end{equation*}
Therefore $R$ is orthogonal to $\ker\alpha$. Lastly, for the constant magnitude condition we see that
\begin{equation*}
S^*g(R,R) = g(- g^{ij}y_i\frac{\partial}{\partial x^j}, - g^{ij}y_i\frac{\partial}{\partial x^j}) = g_{kl}g^{ik}y_ig^{jl}y_j = g^{ij}y_iy_j = 1
\end{equation*}
for being always on $S^*M$.
\end{proof}

Combining Proposition \ref{prop:adaptedmetric} and Theorem \ref{cotangent lift}, we finally get an interpretation of the Kepler problem as a stationary Beltrami solution to the Euler equations.
\begin{corollary}[The Kepler-Euler flow]\label{cor:keplereuler}
The regularized Kepler flow on the $c$-energy level is a stationary Beltrami solution to the Euler equations on
\begin{itemize}
\item $S^*\mathbb{S}^2_c$ if $c < 0$,
\item $S^*\mathbb{R}^2$ if $c = 0$ and
\item $S^*\mathbb{H}^2_c$ if $c > 0$
\end{itemize}
with the liftings to the spherical cotangent bundles of the respective constant $(-2c)$-curvature metrics. The flow lines are lifted geodesics. The Kepler flow on the plane is recovered from the natural stereographic projections of the respective surfaces, or from the involution $x\mapsto\frac{2x}{|x|^2}$ when $c=0$.
\end{corollary}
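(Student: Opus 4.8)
The plan is to obtain the corollary by stringing together the results already proved in this section, the upshot being that the regularized Kepler flow is a Reeb flow to which the cotangent lift $S^*g$ is an adapted metric. The starting point is the Moser-Osipov-Belbruno regularization (Theorem \ref{thm:Moser}): after the stereographic maps of Figure \ref{fig:metricmaps}, the regularized Kepler flow on the $c$-energy level is exactly the lifted geodesic flow on the spherical cotangent bundle $S^*M$ of the relevant constant $(-2c)$-curvature surface $M$, where $M = \mathbb{S}^2_c$, $\mathbb{R}^2$, or $\mathbb{H}^2_c$ according to the sign of $c$. Since the regularized Hamiltonian $K$ of \eqref{eq:regularizedKepler} is purely kinetic, the entire question reduces to the single geometric assertion that the geodesic vector field on a unit cotangent bundle, which is a Reeb field, is Beltrami for the canonical lift of the base metric.

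Carried out in order, the steps are as follows. First I would record that, $K$ being purely kinetic, Lemma \ref{lemma:mechanicalcontact} applies on the energy level $\{K = \tfrac{1}{2}\} = S^*M$ and exhibits $X_K$ as Reeb-like for the Liouville form $\alpha = y\,dx$; up to the positive factor relating $X_K$ to the honest Reeb field $R$ — harmless, as Theorem \ref{thm:Moser} already identifies the flows only up to reparametrization — the regularized flow is the Reeb flow of $\alpha$. Second I would apply Theorem \ref{cotangent lift}, which says precisely that the lift $S^*g$ makes $R$ orthogonal to $\ker\alpha$ and of constant magnitude. Third, these are exactly the hypotheses of Proposition \ref{prop:adaptedmetric}, whose conclusion is that $S^*g$ is adapted to $R$, that is, $\curl R = R$ with respect to the volume form $\alpha\wedge(d\alpha)^n$. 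Finally, $R$ being a nonsingular Beltrami field, Proposition \ref{prop:beltramicontact} promotes this to a genuine stationary Euler solution with constant Bernoulli function. The three cases are dispatched simultaneously, since the sign of $c$ enters only through the choice of surface $M$ and the value $-2c$ of its curvature.

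Because the substantive work lives in the results being cited — the regularization and, above all, Theorem \ref{cotangent lift} — there is no new computation to grind through here, and the real care goes into the compatibility of the identifications. The one point I would make explicit is that the Moser-Osipov-Belbruno equivalence holds only up to time reparametrization, whereas the Beltrami condition and the constancy of the Bernoulli function are statements about a specific vector field: I would note that the regularization has already been pushed to the kinetic Hamiltonian $K$ whose level $\{K=\tfrac12\}$ is literally $S^*M$ and whose Reeb flow is the geodesic flow, so the Beltrami field produced is honestly $R$ and no residual rescaling corrupts the constant Bernoulli function. The remaining assertions — that the flow lines are lifted geodesics and that the planar Kepler flow is recovered by the stereographic projections, or by the involution $x\mapsto 2x/|x|^2$ when $c=0$ — are then read directly off Theorem \ref{thm:Moser} and Figure \ref{fig:metricmaps}.
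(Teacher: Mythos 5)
Your proposal is correct and follows essentially the same route as the paper: the corollary is obtained there exactly by combining the Moser--Osipov--Belbruno regularization (Theorem \ref{thm:Moser}), the Liouville contact form from Lemma \ref{lemma:mechanicalcontact}, Theorem \ref{cotangent lift}, and Proposition \ref{prop:adaptedmetric}, just as you do. Your added remark on the reparametrization issue is sound (and in fact on $\{K=\tfrac12\}$ one has $\alpha(X_K)=2K=1$, so $X_K$ is literally the Reeb field and no rescaling occurs at all).
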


\subsection*{Further remarks on the Kepler-Euler flow}

We conclude with a more detailed description of the lifted metrics and dynamics of the Kepler-Euler flow, relating them to other known Beltrami fields when possible. 

\textbf{The metrics.} The coordinate expressions for the spherical cotangent lift metrics are rather long and messy, so we do not give them here. However, the spherical tangent lift of the round sphere metric was studied by Klingenberg and Sasaki \cite{KS75}, and the lifts of general metrics on surfaces were studied by Nagy \cite{N77}. Since the tangent and cotangent bundles (with the lifted metrics) are isometric~\cite{M77}, the results therein also apply to our case. In the latter article, Nagy showed that the spherical tangent bundle of a surface is of constant curvature only when the base surface is of constant curvature 0 or 1. In these cases, the curvature of the spherical tangent bundles are 0 and $\frac{1}{4}$ respectively. In particular, the Kepler-Euler flow on energy level $c$ takes place on a constant curvature 3-manifold exactly when $c=0$ or $c=-\frac{1}{2}$.

Sasaki further classified geodesics on the spherical tangent bundles of space forms into \textit{horizontal}, \textit{vertical}, and \textit{oblique} geodesics \cite{S76}. Horizontal geodesics are geodesics in which the unit tangent vector is parallel transported along the projection of the geodesic to the base manifold; vertical geodesics are rotations along the fibers of the sphere bundle, the base point remaining stationary; oblique geodesics have some combination of motion through the base and fibers. Since the Kepler-Euler flow is the lift to the spherical cotangent bundle of base geodesics, all of the trajectories are horizontal geodesics according to Sasaki's classification.

\textbf{The Beltrami vector fields.} First, we give the Hamiltonian vector field for the regularized Kepler Hamiltonian given in Equation (\ref{eq:regularizedKepler}). In those coordinates, which are the stereographic coordinates $(x_1,x_2,\alpha)$ on $S^*_c\mathbb{R}^{2}$, where the spherical cotangent bundle is taken with respect to the metric corresponding to the $c$-energy level, the vector field is expressed as
\begin{equation*}
    X_c = \frac{|x|^2 - 2c}{2}( \cos\alpha \partial_{x_1}+\sin\alpha\partial_{x_2} ) + (x_1\sin\alpha - x_2\sin\alpha)\partial_\alpha.
\end{equation*}
A straightforward computation shows that for all $c$, these Beltrami fields have eigenvalue 1, that is, $X_c = \curl_c X_c$, where $\curl_c$ is the curl with respect to the lift of the corresponding constant curvature metric. We now give a few more details on the dynamics of these Beltrami fields according to the sign of the energy level.

When $c<0$, in spherical coordinates $(\phi,\theta,\alpha)$ for $S^*\mathbb{S}^{2}_c\cong\mathbb{R}\mathbb{P}^3$, the vector field is expressed as
\begin{equation*}
    X_c = \rho\sqrt{-2c}\left(\frac{\cos\alpha}{\sin\theta}\partial_\phi + \sin\theta\sin\alpha\partial_\theta + \frac{\cos\theta}{\sin^2\theta}\cos^3\alpha\partial_\alpha\right),
\end{equation*}
where $\rho = \left(\cos^2\alpha + \sin^2\theta\sin^2\alpha\right)^{-\frac{1}{2}}$. While this expression may be somewhat messy, the dynamics are clear: For all $c<0$, the flow is simply a constant rescaling of the flow on $c = -\frac{1}{2}$, and it is well known that the lift of the geodesic flow to $S^*\mathbb{S}^{2}\cong \mathbb{R}\mathbb{P}^3$ is the quotient of the Hopf flow on $\mathbb{S}^{3}$ by the antipodal action (see, for example, Chapter 4 in \cite{FK18}). Thus, the orbits are all the Hopf orbits under the antipodal map quotient.

When $c = 0$, applying the involution $x\mapsto \frac{2x}{|x|^2}$, we can reexpress $X_0$ as
\begin{equation*}
    X_0 = \cos\alpha\partial_{x_1} + \sin\alpha\partial_{x_2}.
\end{equation*}
This flow on $S^*\mathbb{R}^{2}\cong \mathbb{R}^2\times\mathbb{S}^{1}$ is a \textit{shear} type Beltrami flow (meaning that the flow consists of parallel stream lines that twist as the angle $\alpha$ changes).

Finally, when $c>0$, in half-plane coordinates $(x,y,\alpha)$ for $S^*\mathbb{H}^2_c\cong\mathbb{R}\times\mathbb{R}^+\times\mathbb{S}^{1}$ the vector field is expressed as
\begin{equation*}
    X_c = y\sqrt{2c}\left(\cos\alpha\partial_x + \sin\alpha\partial_y\right) + \sqrt{2c}\cos\alpha\partial_\alpha.
\end{equation*}
As in the case $c<0$, this Beltrami field is the same for all values of $c$ up to a constant rescaling, so the dynamics are the same. Naturally, all streamlines are open and tend to $y=0$ or $y = +\infty$. 

%%%%%%%%%%%%%%%%%%%%%%%%%%%%%%%%%%%%%%%%%%%%%%%%%%%%
% REFERENCES
%%%%%%%%%%%%%%%%%%%%%%%%%%%%%%%%%%%%%%%%%%%%%%%%%%%%

\end{document}